\def\serieslogo@{}
\def\@setcopyright{}
\def\bdm{\begin{displaymath}}
\def\edm{\end{displaymath}}
\def\be{\begin{equation}}
\def\ee{\end{equation}}
\newcommand{\arginf}[1]{\mathop{arg\,min}_{#1}}
\def\et{\eta}
\def\lam{\lambda}
\def\two{2} 
\def\i{1}  
\def\ii{2} 
\def\lami{{\lam_\i}}
\def\lamii{{\lam_\ii}}
\def\bra{[}
\def\ket{]}
\def\bbv{{\mathbf v}} 
\def\J{{\mathcal \vee}}
\def\JT{\J} 
\def\qmin{{v}_{min}}
\def\h{\bbv} 
\def\T{{\mathcal L}}  
\def\Tdual{{\T^{*}\!}}   
\def\P{{\mathcal P}}                       
\def\Pdual{{\P^{*}}}
\def\x{U}  
\def\u{{\mathbf u}}  
\def\ueps{\u^\epsilon}
\def\uepsa{u^\epsilon_1}
\def\uepsb{u^\epsilon_2}
\def\omeps{\omega^\epsilon}
\def\ff{{\mathbf f}}
\def\r{{r}}  
\def\rg{{g}}
\def\bg{{\mathbf g}}
\def\B{{\mathbb B}}  
\def\Bdual{{\B^{*}}}
\newcommand{\Bnorm}[1]{ {\|#1\|_{\B}}}    
\def\Leb{L}  
\def\Wdot{\dot{W}} 
\def\Hdot{{\dot{H}}} 
\def\BV{\Wdot^{1,1}}
\def\OPhi{{\Leb}^{p}}				
\def\OPhid{{\Leb}^{p}_{\#}}                   
\def\OPsi{{\Leb}^{p'}}				
\def\OPsid{{\Leb}^{p'}_{\#}}                  
\newcommand{\Lp}[1]{\|#1\|^p_{\Leb^p}} 
\newcommand{\dualnorm}[1]{ \|#1\|_{\Bdual}}    
\newcommand{\myphi}[1]{\varphi(#1)}   
\newcommand{\Lin}[1]{\Tdual\varphi(#1)} 
\def\div{div\,}
\def\curl{curl\,}
\def\RR{\mathbb R}
\def\TT{\mathbb T}  
\def\Om{\Omega}     
\def\BVbeta{\beta}  
\def\BVgamma{\gamma}
\def\BVbetap{\et} 
\def\two{2}   
\def\lesssim{\stackrel{{}_<}{{}_\sim}}
\def\geqsim{\stackrel{{}_>}{{}_\sim}}
\def\eqsim{{\Large \, \stackrel{{}_{\sim}}{{}_{=}}\, }}
\newtheorem{thm}{Theorem}[section]
\newtheorem{theorem*}{Theorem}
\newtheorem{lem}[thm]{Lemma}
\newtheorem{cor}[thm]{Corollary}
\newtheorem{assumption*}[theorem*]{Assumption}
\theoremstyle{definition}
\newtheorem{rem}{Remark}[section]
\numberwithin{equation}{section}
\numberwithin{thm}{section}
\numberwithin{rem}{section}
\numberwithin{defn}{section}
\begin{document}

\footnotetext{July 6, 2014} 

\title[Hierarchical construction of bounded solutions\\in critical regularity spaces]
{Hierarchical construction of bounded solutions\\in critical regularity spaces}  

\author{Eitan Tadmor}
\address{Department of Mathematics, Center of Scientific Computation And Mathematical Modeling (CSCAMM)
and Institute for Physical Science and Technology (IPST), University of Maryland, MD 20742.}
\curraddr{} \email{tadmor@cscamm.umd.edu}
\thanks{Research was supported  by NSF grants DMS10-08397, RNMS11-07444 (KI-Net) and   ONR grant \#N00014-1210318.} 

\subjclass{35C10, 35F05, 46A30, 49M27, 68U10}
\keywords{Closed range theorem, hierarchical solutions, duality, Sobolev estimates, divergence, curl, multiscale expansion, image processing, critical regularity.}
\setcounter{page}{1}

\begin{abstract}
We construct uniformly bounded solutions for the equations $\div\x=f$ and $\curl\x=\ff$ in the critical cases
$f\in \Leb^d_\#(\TT^d,\RR)$, and respectively, $\ff\in \Leb^3_\#(\TT^3,\RR^3)$. Criticality in this context, manifests itself by the lack of linear solution operator mapping $\Leb^d_\# \mapsto \Leb^\infty(\TT^d)$, \cite{BB03,BB07}. Thus, the intriguing aspect here is that although the problems are linear, the construction of their  solution is not. 

 Our constructions are special cases of  a general framework for solving linear equations of the form $\T\x=f$, where  $\T$ is a linear operator densely defined in Banach space $\B$ with a closed range in a (proper subspace) of Lebesgue space $\Leb^p_\#(\Om)$, and with an injective dual $\Tdual$. The solutions are realized  in terms of a
 multiscale \emph{hierarchical representation}, $\x=\sum_{j=1}^\infty\u_j$, interesting for its own sake. Here, the $\u_j$'s are constructed 
\emph{recursively} as minimizers of
${\u_{j+1} = \arginf{\u}\big\{\Bnorm{\u}+\lam_{j+1}\|\r_j-\T\u\|^p_{\Leb^p(\Om)}\big\}}$, where $\r_j:=f-\T(\sum^j_{k=\i}\u_k)$
are resolved in terms of  a dyadic sequence of scales $\lam_{j+1}:=\lami \two^j$ with large enough $\lam_1 \geqsim \|f\|_{\Leb^p}^{1-p}$. 
The  nonlinear aspect of this  construction is a  counterpart of the  fact that one cannot linearly solve $\T\x=f$ in critical spaces.
\end{abstract}

\maketitle
\vspace*{-0.8cm}
{\small \tableofcontents}

\markboth{{\sc Eitan Tadmor}}{{\sc Construction of hierarchical solutions in critical regularity spaces}}

\vspace*{-0.8cm}
\begin{verbatim}
      "Whenever you can settle a question by explicit construction, 
       be not satisfied with purely existential arguments."
\end{verbatim}

\medskip
\hspace{8.2cm}{Hermann Weyl, \cite[p. 326]{Weyl46}}

\section{Introduction and statement of the main results}

We begin with a prototype example for the class of equations alluded to in the title of the paper.
Let $\Leb^d_\#(\TT^d)$ denote the Lebesgue space of  periodic functions with zero mean over the $d$-dimensional torus $\TT^d$. Given $f\in \Leb^d_\#(\TT^d)$, we seek a uniformly bounded solution of the problem
\begin{equation}\label{eq:div}
\div \x = f, \qquad \x\in \Leb^\infty(\TT^d,\RR^d).
\end{equation}
The classical  elliptic solution of the first half of (\ref{eq:div}), $\x = \nabla\Delta^{-1}f$, lies in $W^{1,d}_\#(\TT^d)$ which   may fail to satisfy the  uniform bound sought in the second half of (\ref{eq:div}).
Thus, the question is whether (\ref{eq:div}) admits a solution which gains  uniform boundedness, $\|\x\|_{L^\infty} \lesssim \|f\|_{\Leb^d}$, at the expense of giving up on the  irrotationality condition $\curl \x = 0$. This question was addressed by Bourgain and Brezis, \cite[Proposition 1]{BB03}. They prove that \eqref{eq:div} admits uniformly bounded solutions for all $f \in \Leb^d_\#(\TT^d)$, with the intricate aspect that a solution operator mapping $\Leb^d_\#\mapsto \Leb^\infty$ \emph{must be nonlinear}; in particular, therefore, the uniform boundedness of irrotational elliptic solution \emph{must} fail.
 The existence  of such uniformly bounded solutions  was proved in \cite{BB03}  using a straightforward duality argument based on the closed range theorem.

 The purpose of this paper is to present an alternative approach for the existence of such solutions. Our approach is \emph{constructive}: 
the solution $\x$ is constructed as a \emph{hierarchical} sum, $\x=\sum_{j=1}^\infty\u_j$, where the $\{\u_j\}$'s are computed recursively as appropriate minimizers,
\[
  \u_{j+1} = \arginf{\u} \Big\{\|\u\|_{\Leb^\infty} + \lami 2^j\big\|f-\div\Big(\sum_{k=1}^j\u_k+\u\Big)\|^d_{\Leb^d}\Big\}, \quad j=0,1,\ldots,
\]
and $\lami$ is a sufficiently large parameter specified below. As an example, we refer  to \cite{TT11} for the computation of uniformly bounded hierarchical solution of the equation  $\div \x= \Delta G$ with $G:=x_1|\ln r|^{1/3}\zeta(r) \in \Leb^2_\#(\TT^2)$
where $\zeta(\cdot)$ is  a radial cut-off away from the origin, \cite[\S3, Remark 7]{BB03}. The elliptic solution, $\x=\nabla G$, has a fractional logarithmic growth at the  origin, whereas the computation confirms that the hierarchical solution $\x=\sum\u_j$ remains uniformly bounded, ${\|\x\|_{\Leb^\infty} \lesssim \|\Delta G\|_{\Leb^2} < \infty}$.

\medskip\noindent
The above construction  is in fact a special case of our main result which applies to 
 general linear problems of the form 
\begin{equation}\label{eq:int_Lxfp}
\T\x=f, \quad f\in\Leb^p_\#(\Om), \quad \Om\subset \RR^d, \ 1<p<\infty.
\end{equation}
Here, $\T:\B\mapsto \Leb^p_\#(\Om)$ is a linear operator densely  defined on a Banach space $\B$ with a closed range in  $\Leb^p_\#(\Om)$. The subscript $\{\cdot\}_\#$ indicates an  appropriate subspace of $\Leb^p$,
\[
\Leb^p_\#(\Om)= \Leb^p(\Om)\cap \mbox{Ker}(\P),
\]
 where $\P:\Leb^p\mapsto \Leb^p$ is a linear operator whose null is ``compatible" with the range of $\T$ so that the dual of $\T$ is injective, namely, there exists $\BVbeta>0$ such that
\begin{equation}\label{ass:int_Hp}
\|\rg-\Pdual\rg\|_{\Leb^{p'}} \leq \BVbeta\dualnorm{\Tdual\rg}, \qquad \forall \rg\in \Leb^{p'}(\Om).
\end{equation}
The closed range theorem combined with the open mapping principle tell us  that  if the {\it a priori} duality estimate assumed in  (\ref{ass:int_Hp}) holds, then equation \eqref{eq:int_Lxfp} admits a  solution, $\Bnorm{\x}\lesssim \BVgamma \|f\|_{\Leb^p}$ with a constant $\BVgamma=\BVgamma(\BVbeta,p,d)$.  Our main result explains the existence of such $\x$s by \emph{explicit construction}. 

\begin{thm}\label{thm:int_Lx=f_in_Lp}
Fix $1<p<\infty$ and assume that the {\it a priori} estimate \eqref{ass:int_Hp} holds.
There exists $\BVgamma<\infty$ (depending on $p$ and linearly on $\BVbeta$) such that for any given $f\in \Leb^p_\#(\Om)$, the equation 
$\T\x=f$  admits a  hierarchical solution of the form $\x=\sum_{j=\i}^\infty \u_j\in \B$, 
\begin{equation}\label{eq:int_x_in_B}
\Bnorm{\x}\leq \BVgamma \|f\|_{\Leb^p}, \qquad \BVgamma <\infty.
\end{equation} 
Here, the $\{\u_j\}$'s are constructed recursively as minimizers of 
\begin{equation}\label{eq:int_hie}
\u_{j+1}=\arginf{\u} \left\{\Bnorm{\u} + \lam_{j+1}\big\|f-\T\Big(\sum_{k=\i}^j\u_k +\u\Big)\big\|^p_{\Leb^p}\right\}, \quad  \  j=0,1,\ldots,
\end{equation}
where $\{\lam_{j}\}_{j\geq1}$ is a dyadic sequence, $\lam_{j+1}:=\lami \two^{j}$  with a sufficiency large $\displaystyle  \lami > \frac{\BVbeta}{p}\|f\|^{1-p}_{\Leb^p}$.
\end{thm}

\begin{rem}[Exponential convergence]\label{rem:sharp}
The description of $\x$ as the sum $\x=\sum\u_j$ provides a multiscale \emph{hierarchical decomposition} of a solution for \eqref{eq:int_Lxfp} for  rapidly increasing sequence of scales, $\lam_{j+1}=\lami \zeta^{j}$ with any $\zeta >1$. The role $\{\lam_j\}$'s as the different scales associated with the $\u_j$'s is reflected through the exponential decay bound (consult  \eqref{eq:finite} below)
\[
\Bnorm{\u_j} \lesssim \frac{\lam_{j+1}}{\lam^{p'}_j} \sim \|f\|_{\Leb^p_\#} \zeta^{-\frac{j}{p-1}}, \qquad \lam_{j+1}=\lam_1\zeta^j \sim 
\frac{\zeta^j}{\|f\|^{p-1}_{\Leb^p_\#}}, \qquad 1<p<\infty.
\]
For simplicity, we limit our discussion to the dyadic case $\zeta=2$. 
\end{rem}

\begin{rem}[On the {\it a priori} duality estimate \eqref{ass:int_Hp}]\label{rem:beta}
The {\it a priori} estimate \eqref{ass:int_Hp} is exactly what is needed for the hierarchical construction $\sum \u_j$ to converge.  It should be emphasized, however, that  the construction  does \emph{not} require   knowledge of the precise value of the constant $\BVbeta$ appearing in estimate \eqref{ass:int_Hp}. Indeed, the parameter $\BVbeta$ enters through the initial scale, $\lami$, which is to be chosen  large enough,
\[
\lami \geq \frac{\BVbeta}{p}\|f\|^{1-p}_{\Leb^p},
\]
so that by lemma \ref{cor:caseii}, it dictates a non-trivial first  hierarchical step, 
\[
\u_1=\arginf{\u} \left\{\Bnorm{\u} + \lami\big\|f-\T\u\big\|^p_{\Leb^p}\right\}.
\]
What happens if the initial scale $\lami$ is underestimated relative to an unknown value of $\BVbeta$? then, as  noted in lemma \ref{cor:casei} below,  the variational statement (\ref{eq:int_hie})   will yield zero hierarchical terms, $\u_j\equiv0$ for increasing sequence of scales $\lami 2^j,\ j=1,2,\ldots$, until reaching the critical scale such that $p\lami \two^{j_0} \geqsim \BVbeta\|f\|^{1-p}_{\Leb^p}$, which will dictate the first non-trivial step of the hierarchical decomposition, $\x=\sum_{j=j_0}\u_j$. 
In this sense, the  construction of hierarchical solution, $\x=\sum \u_j$ is \emph{independent} of  the precise value of $\BVbeta$ in  \eqref{ass:int_Hp}: the latter is only needed to guarantee that the hierarchical construction will indeed pick up the first  non-trivial minimizer after finitely many steps $j_0$. 
\end{rem}

\begin{rem}[The limiting cases $p=1,\infty$]
The $\Leb^p$-valued hierarchical constructions in theorem \ref{thm:int_Lx=f_in_Lp} can be extended to a more general setup of operators valued in Orlicz spaces (outlined in remark \ref{rem:orlicz} below). 
The  limiting cases, however,  are excluded; for example, there exist no $\Wdot^{1,p}$ solutions of $\div\x=f$ for general
$f\in\Leb^p$ with $p=1,\infty$, \cite[Section 2]{BB03}, \cite{DFT05}.  The iterative aspect of the hierarchical construction is reminiscent of Artola \& Tartar construction of  $\Leb^p(\RR)$-functions for the end case $p=1$, as a limiting case for interpolation of $W^{1,1}(\RR^2)$-traces, \cite[\S II]{Tar94},\cite{Ga57}.
\end{rem}
\noindent

$\Leb^2$-based hierarchical decompositions were introduced by us in the context of image processing, \cite{TNV04,TNV08}, and motivated the present  construction of solutions in the more general setup of the closed range theorem. We demonstrate
such hierarchical constructions of solution to two important examples of critical regularity  studied by Bourgain \& Brezis, \cite{BB03,BB07}. These are the construction of uniformly bounded solutions to $\div\x=f \in \Leb^d_\#(\TT^d)$,  discussed in section \ref{sec:divU=F}, and construction of uniformly bounded solutions to $\curl\x=\ff \in \Leb^3_\#(\TT^3,\RR^3)$ discussed in section \ref{sec:curlU=F}.
The critical regularity in these cases manifests itself in terms of lack of  right inverses for $\T$ bounded on the corresponding critical $\Leb^p$ spaces, or equivalently, $Ker{\T}$ which cannot be complemented in $\Leb^\infty$,
\cite[\S 3]{BB03},\cite[\S5.3]{Aji10}.

The main novelty of theorem \ref{thm:int_Lx=f_in_Lp} is  using these hierarchical decompositions for explicit construction of solutions  for general equations governed by operators with a closed range in $L^p_\#, \ 1<p<\infty$.  The proof of the special case $p=2$  is given in section \ref{subsec:peq2}: here, we trace precise bounds and clarify their role in the hierarchical construction. The $\Leb^2$-case    serves as the prototype case for the general setup of hierarchical constructions in  $\Leb^p$ spaces  in section \ref{sec:Lp}. Finally, the  characterization of minimizers, such as those encountered in \eqref{eq:int_hie}, is summarized in section \ref{sec:JTP}. 

\medskip\noindent
{\bf Acknowledgment}. I indebted to Haim Brezis for discussions on the works \cite{BB03,BB07}, to Fran\c{c}ois Golse, Giuseppe Savar\'{e} and Terence Tao who, respectively, brought to my attention \cite{Jam47}, \cite{Ga57,Tar94} and \cite{Aji10}, and to Przemyslaw Wojtaszczyk for his comments on an earlier version of this paper \cite{Woj10}.

\section{Bounded solutions of $\div \x=f\in \Leb^p_\#(\Omega,\RR)$}\label{sec:divU=F}
\addtocontents{toc}{\protect\setcounter{tocdepth}{1}}

Let $\P$ denote the averaging projection, $\P\rg:=\overline{\rg}$ where $\overline{\rg}$ is the average value of $\rg$. Given  $f\in \Leb^p_\#(\TT^d):=\big\{\rg\in\Leb^p(\TT^d)\ | \ \overline{\rg}=0\big\}$, then according to theorem \ref{thm:int_Lx=f_in_Lp},  we can construct hierarchical solutions of 
\begin{equation}\label{eq:divx=f}
\div \x=f, \qquad f\in \Leb^p_\#(\TT^d), \quad 1<p<\infty,
\end{equation}
in an appropriate Banach space, $\x\in\B$, provided the corresponding {\it a priori} estimate \eqref{ass:int_Hp} holds, namely, there exists a constant $\BVbeta>0$ (which may vary of course, depending on $p, d$ and $\B$), such that
\begin{equation}\label{ass:Hdivp}
\|\rg-\overline{\rg}\|_{\Leb^{p'}} \leq \BVbeta \dualnorm{\nabla\rg}, \qquad \forall \rg\in \Leb^{p'}(\TT^d).
\end{equation}
We specify four cases of such relevant $\B$'s.

\medskip\noindent
{\bf \#1. Solution  of} $\div\x=f\in \Leb^p_\#$ {\bf with}  $\x\in\Wdot^{1,p}$.
Since
\[
\|\rg-\overline{\rg}\|_{\Leb^{p'}(\TT^d)} \leq \|\nabla\rg\|_{\Wdot^{-1,p'}(\TT^d,\RR^d)}, \quad \forall \rg\in \Leb^{p'}(\TT^d),
\]
we can construct hierarchical solutions of (\ref{eq:divx=f}) in $\B=\Wdot^{1,p}(\TT^d,\RR^d), 1<p<\infty$. 
This is the same integrability space of the  irrotational solution of (\ref{eq:divx=f}), 
$\nabla\Delta^{-1}f\in\Wdot^{1,p}(\TT^d,\RR^d)$.

\medskip\noindent
{\bf \#2. Solution of} $\div\x=f\in \Leb^p_\#$ {\bf with} $\x\in\Leb^{p^*}$.
By Sobolev inequality 
\begin{equation}\label{eq:Sob}
\|\rg-\overline{\rg}\|_{\Leb^{p'}(\TT^d)} \leq \BVbeta \|\nabla\rg\|_{\Leb^{(p^*)'}(\TT^d,\RR^d)}, \qquad  \frac{1}{p^*}= \frac{1}{p}-\frac{1}{d}, \qquad d\leq p<\infty, \quad \forall \rg\in \Leb^{p'}(\TT^d), 
\end{equation}
where the case $p=d$  corresponds to the  isoperimetric Gagliardo-Nirenberg inequality, \cite{DPD02,CNV04} 
$\|\rg-\overline{\rg}\|_{\Leb^{d'}(\TT^d)} \leq \BVbeta \| \rg\|_{\BV(\TT^d)}$. We distinguish between two cases.

\smallskip
(i) The case $d<p<\infty$: the equation 
$\div \x=f \in \Leb^p_\#(\TT^d)$
 has a solution 
$\x\in \Leb^{p^*}(\TT^d,\RR^d)$. This is the same integrability space of the irrotational solution
 $\nabla\Delta^{-1}f \in W^{1,p}(\TT^d,\RR^d)\subset L^{p^*}(\TT^d,\RR^d)$. 

\smallskip
(ii) The case  $d=p$: the equation  $\div \x=f \in \Leb^d_\#(\TT^d)$
 has a solution $\x\in \Leb^\infty(\TT^d,\RR^d)$. This is the   the prototype example discussed in the beginning of the introduction. According to the  intriguing observation of  Bourgain \& Brezis, \cite[Proposition 2]{BB03}, there exists no bounded right inverse $K:\Leb^d_\# \mapsto \Leb^\infty$ for the operator $\div$, and therefore, there exists no \emph{linear} construction of solutions $f\mapsto \x$ (in particular, $\nabla\Delta^{-1}f$ cannot be uniformly bounded). Theorem \ref{thm:int_Lx=f_in_Lp} provides a \emph{nonlinear}  hierarchical construction of such solutions. The computation of such $\Leb^\infty$-solutions  using hierarchical iterations  in the two-dimensional critical case was carried out in \cite{TT11}.

\begin{rem}[Homogeneity]
 We rewrite the hierarchical iteration \eqref{eq:int_hie} with $\lami=C\|f\|^{1-p}_{\Leb^p}$ in the form
\[
\bra\u_{j+1},\r_{j+1}\ket=\arginf{\T\u+\r=\r_j} \left\{\Bnorm{\u} + C\two^j\frac{\big\|\r\big\|^p_{\Leb^p}}{\|f\|^{p-1}_{\Leb^p}}\right\}, \quad 
\r_j:=\left\{\begin{array}{ll} f, & j=0\\
f-\T\big(\sum_{k=\i}^j\u_k\big), & j=1,2\ldots .\end{array}\right.
\]
Observe that if $[\u_\i,\r_\i]$ is the first minimizer associated with $\r_0=f$, then  the corresponding first minimizer associated with  $\alpha f$ is $[\alpha\u_\i,\alpha\r_\i]$, and recursively, the next hierarchical components are $[\alpha\u_j,\alpha\r_j]$. Thus, as  noted in \cite[remark 1.1]{TNV08}, the hierarchical solution is homogeneous of degree one: if  $\x=\x_{f}$ specifies the (nonlinear) dependence of  hierarchical solution on $f$, then $\x_{\{\alpha f\}}=\alpha\x_{f}$.
\end{rem}

\medskip\noindent
{\bf \#3. Solution  of} $\div\x=f\in \Leb^d_\#$ {\bf with} $\x\in\Leb^{\infty}\cap \Wdot^{1,d}$. 
A central question raised and answered in \cite{BB03} is whether (\ref{eq:divx=f}) has a solution which captures the \emph{joint} 
regularity, $\x\in \B=\Leb^{\infty}\cap \Wdot^{1,d}(\TT^d,\RR^d)$. To this end, one needs to verify the duality estimate  \eqref{ass:Hdivp}, which now reads
\begin{equation}\label{eq:Wdot_Lpq}
\|\rg-\overline{\rg}\|_{\Leb^{d'}(\TT^d)} \leq \BVbeta \|\nabla \rg\|_{\Leb^1 + \Wdot^{-1,d'}(\TT^d,\RR^d)}, \qquad \forall \rg\in \Leb^{d'}(\TT^d). 
\end{equation}
This  key estimate was proved in \cite{BB03}. Thus,  theorem \ref{thm:int_Lx=f_in_Lp}  converts (\ref{eq:Wdot_Lpq}) into  a constructive proof of:

\begin{cor}\label{cor:divU=f} The equation $\div\x=f\in \Leb^d_\#(\TT^d)$ admits a solution 
 $\x\in \Leb^{\infty}\cap \Wdot^{1,d}(\TT^d,\RR^d)$,  given by the hierarchical decomposition $\x=\sum_{j=1}\u_j$, which is constructed by the refinement step,
\[
\u_{j+1}=\arginf{\u} 
\left\{\|\u\|_{\Leb^{\infty}\cap \Wdot^{1,d}}+\lami \two^j\big\|f-\div\Big(\sum_{k=1}^j\u_k +\u\Big)\big\|^d_{\Leb^d}\right\}, \quad j=0,1,2\ldots,
\]
with sufficiently large $\lami \geq \frac{\BVbeta}{d}\|f\|^{1-d}_{\Leb^d}$. 
\end{cor}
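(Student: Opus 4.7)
The plan is to derive the corollary as a direct instance of Theorem~\ref{thm:int_Lx=f_in_Lp} with the choices $p=d$, $\T=\div$, projection $\P g=g-\overline{g}$ (so that $\Leb^d_\#=\mbox{Ker}(\P)$), and $\B=\Leb^\infty\cap\Wdot^{1,d}(\TT^d,\RR^d)$ equipped with the natural norm $\|\u\|_{\B}=\|\u\|_{\Leb^\infty}+\|\u\|_{\Wdot^{1,d}}$. Since $\B\subset\Wdot^{1,d}$, divergence maps all of $\B$ into $\Leb^d_\#(\TT^d)$, so the density requirement of Theorem~\ref{thm:int_Lx=f_in_Lp} is automatic, and integration by parts on the torus identifies the adjoint as $\Tdual g=-\nabla g$ for $g\in\Leb^{d'}(\TT^d)$.

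The core step is to verify the duality estimate \eqref{ass:int_Hp} in this setting. The standard identification of the dual of an intersection of Banach spaces gives $\Bdual\cong\Leb^1+\Wdot^{-1,d'}(\TT^d,\RR^d)$ with the sum-norm, so that
\[
\dualnorm{\Tdual g} \,=\, \|\nabla g\|_{\Leb^1+\Wdot^{-1,d'}}, \qquad g\in\Leb^{d'}(\TT^d),
\]
and \eqref{ass:int_Hp} becomes exactly the Bourgain--Brezis estimate
\[
\|g-\overline{g}\|_{\Leb^{d'}(\TT^d)}\,\leq\, \BVbeta\,\|\nabla g\|_{\Leb^1+\Wdot^{-1,d'}(\TT^d,\RR^d)},
\]
which is precisely \eqref{eq:Wdot_Lpq}. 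Taking \eqref{eq:Wdot_Lpq} as given (\cite[\S1.7]{BB03} and, with explicit constants, \cite{Ma07}), the closed range theorem ensures $\div(\B)=\Leb^d_\#$, so all hypotheses of Theorem~\ref{thm:int_Lx=f_in_Lp} are met.

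Invoking Theorem~\ref{thm:int_Lx=f_in_Lp} then produces the desired solution $\x=\sum_{j\geq 1}\u_j\in\Leb^\infty\cap\Wdot^{1,d}$ with the iterates
\[
\u_{j+1}=\arginf_{\u}\Big\{\|\u\|_{\Leb^\infty\cap\Wdot^{1,d}}+\lam_{j+1}\big\|\r_j-\div\u\big\|^d_{\Leb^d}\Big\},\quad \r_j:=f-\div\bigl(\sum_{k=1}^j\u_k\bigr),
\]
and $\lam_{j+1}=\lami\zeta^j$, $\zeta>1$, provided $\lami\geqsim\BVbeta\|f\|_{\Leb^d}^{1-d}$, which is exactly the sufficiency condition stated in the corollary.

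The only genuinely hard step in this proof is \eqref{eq:Wdot_Lpq} itself, which lies outside the hierarchical framework and is the deep contribution of Bourgain and Brezis. The content of the corollary is to recast that non-constructive duality into the explicit iterative scheme above, so the argument in this section is essentially a matter of checking that the abstract theorem is applicable.
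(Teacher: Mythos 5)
Your proposal is correct and follows essentially the same route as the paper: the corollary is obtained by specializing Theorem~\ref{thm:int_Lx=f_in_Lp} to $p=d$, $\T=\div$, $\P\rg=\rg-\overline{\rg}$ and $\B=\Leb^\infty\cap\Wdot^{1,d}$, after observing that the duality hypothesis \eqref{ass:int_Hp} in this setting is exactly the Bourgain--Brezis estimate \eqref{eq:Wdot_Lpq} (via $\Bdual\cong\Leb^1+\Wdot^{-1,d'}$), which is quoted from \cite{BB03,Ma07}. The paper treats this as an immediate consequence and offers no further argument, so your slightly more explicit verification of the hypotheses matches its intent.
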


\begin{rem}
We comment here the key role of the duality estimate \eqref{eq:Wdot_Lpq}. The case $d=2$  was proved by a direct method outlined in \cite[Section 4]{BB03}; alternative two-dimensional proofs were given by  Maz'ya \cite{Ma07} and Mironescu \cite{Mi10}.  For $d>2$, however,  the derivation of \eqref{eq:Wdot_Lpq} was proved in Bourgain \& Brezis \cite[theorem 1]{BB03} as a byproduct of their \emph{construction} of $\Leb^{\infty}\cap \Wdot^{1,d}$ solutions for $\div\x=f$!.  The construction,  based on an intricate Littlewood-Paley decomposition  is rather involved \cite[section 6]{BB03},  and to  our knowledge, a simpler, \emph{direct} derivation of \eqref{eq:Wdot_Lpq} is still open.
Thus, corollary \ref{cor:divU=f}  --- which still depends on  the construction of Bourgain \& Brezis  to justify \eqref{eq:Wdot_Lpq}, offers a simpler alternative for the  construction of such  $\Leb^\infty\cap\Wdot^{1,d}$-bounded solutions in terms of the minimizers,  
\[
\J_{\div}(\r,\lam):=\mbox{inf}_{\u}\big\{\|\u\|_{\Leb^\infty\cap\Wdot^{1,d}}+\lam \|r-\div\u\|_{\Leb^d}^d\big\}.
\]
Computation of the related  $\Leb^\infty$-based minimizers were carried out in \cite{GLMV07,LV05} and it would be desirable to develop efficient algorithms to compute the corresponding minimizers of $\J_{\div}(\r,\lam;\Leb^\infty\cap\Wdot^{1,d})$. Spectral approximation of such minimizers was discussed in \cite{Ma06}.

\smallskip\noindent
Since the proof of the dual estimate  \eqref{eq:Wdot_Lpq} in $d>2$ dimensions is indirect, a specific value of $\BVbeta$ is not known. As noted in remark \ref{rem:beta}, however, the hierarchical construction can proceed  without {\it a priori} knowledge of the precise value of $\BVbeta$:  if one  sets $\lami=\|f\|^{1-d}_{\Leb^d}$ and this  initial scale  underestimates a correct value of, say,  $\BVbeta>1$, then it will take at most $j_0\sim \log(\BVbeta)$  steps  before picking-up non-trivial terms in the  hierarchical decomposition, $\x=\sum_{j=j_0}\u_j$. 
\end{rem}

\medskip\noindent
{\bf \#4. Solution  of} $\div\x=f\in \Leb^d_\#(\Om)$ {\bf with} $\x\in\Leb^{\infty}\cap \Wdot^{1,d}_0(\Om)$.
 The constructions of bounded solutions for (\ref{eq:divx=f}) extend to the case of Lipschitz domains,  $\Omega\subset \RR^d$, see \cite[section 7.2]{BB03}. For future reference we state the following.

\begin{cor}\label{cor:divU=finOm} Given $f\in \Leb^d_\#(\Om):=\big\{g\in\Leb^d(\Om)\ | \ \displaystyle \int_\Om g(x) dx=0\big\}$, then  the equation ${\div\x=f}$ admits a solution 
 $\x\in \Leb^{\infty}\cap \Wdot^{1,d}_0(\Om,\RR^d)$,  such that
\[
\|\x\|_{\Leb^{\infty}\cap \Wdot^{1,d}(\Om)} \leq \BVgamma \|f\|_{\Leb^d(\Om)}.
\]
It is given by the hierarchical decomposition, $\x=\sum_{j=1}\u_j$, 
which is constructed by the refinement step,
\[
\u_{j+1}=\arginf{\u: \ \u_{|\partial\Om}=0} 
\left\{\|\u\|_{\Leb^{\infty}\cap \Wdot^{1,d}(\Om)}+\lami\two^j\big\|f-\div\Big(\sum_{k=1}^j\u_k+\u\Big)\big\|^d_{\Leb^d(\Om)}\right\}, \ j=0,1,2\ldots,
\]
with sufficiently large $\lami \geqsim {\BVbeta}{\|f\|^{1-d}_{\Leb^d(\Om)}}$.
\end{cor}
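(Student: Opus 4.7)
The plan is to reduce Corollary~\ref{cor:divU=finOm} to a direct application of Theorem~\ref{thm:int_Lx=f_in_Lp}, mirroring the periodic argument behind Corollary~\ref{cor:divU=f} but with the torus replaced by the Lipschitz domain $\Om$. In this setup I take $\T=\div$ as a densely defined operator on the Banach space $\B=\Leb^{\infty}\cap\Wdot^{1,d}_0(\Om,\RR^d)$, with $p=d$ and $\P\rg:=\rg-\overline{\rg}$ so that the range of $\T$ lies in $\Leb^d_\#(\Om)$.

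The first step is to identify the dual framework. Since $\B$ is an intersection equipped with the sum (or max) norm, its dual $\Bdual$ is the sum space $\Leb^1+\Wdot^{-1,d'}(\Om,\RR^d)$ equipped with the infimal convolution norm, and the dual operator $\Tdual$ acts on $\rg\in\Leb^{d'}(\Om)$ as the distributional gradient $-\nabla\rg$ (the homogeneous condition built into $\Wdot^{1,d}_0$ being exactly what allows integration by parts without a boundary contribution). Consequently the abstract hypothesis \eqref{ass:int_Hp} reduces, in the present setting, to the Lipschitz-domain version of the Bourgain--Brezis estimate:
\begin{equation*}
\|\rg-\overline{\rg}\|_{\Leb^{d'}(\Om)} \leq \BVbeta\,\|\nabla\rg\|_{\Leb^1+\Wdot^{-1,d'}(\Om,\RR^d)},\qquad \forall \rg\in\Leb^{d'}(\Om).
\end{equation*}

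The essential second step is to establish this inequality on $\Om$. This is precisely what is done in \cite[Section~7.2]{BB03}, by extending $\rg$ off $\Om$ in a controlled manner and reducing to the periodic estimate \eqref{eq:Wdot_Lpq} already at our disposal; alternatively one could invoke Maz'ya's refinement \cite{Ma07} to keep track of the constant $\BVbeta$ explicitly in terms of the Lipschitz character of $\partial\Om$. Here I would quote this as a black box rather than redo the extension/restriction argument.

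Once the duality inequality is available, Theorem~\ref{thm:int_Lx=f_in_Lp} applies verbatim with $p=d$ and any exponentially increasing sequence $\lam_{j+1}=\lami\zeta^j$ with $\lami\geqsim \BVbeta\|f\|^{1-d}_{\Leb^d(\Om)}$: the iterates $\u_{j+1}$ are well-defined minimizers of the stated refinement step, the series $\x=\sum_{j\geq 1}\u_j$ converges in $\B$, and the abstract bound \eqref{eq:int_x_in_B} translates into the claimed estimate $\|\x\|_{\Leb^\infty\cap\Wdot^{1,d}(\Om)} \leq \BVbeta\|f\|_{\Leb^d(\Om)}$. The only real obstacle is concentrated in verifying the duality estimate on $\Om$; everything else is bookkeeping that inherits directly from Theorem~\ref{thm:int_Lx=f_in_Lp}.
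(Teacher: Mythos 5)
Your proposal is correct and follows exactly the route the paper intends: the paper offers no separate proof of Corollary~\ref{cor:divU=finOm} beyond the remark that the periodic construction (Corollary~\ref{cor:divU=f}) ``extends to the case of Lipschitz domains, see \cite[section 7.2]{BB03}'', i.e.\ one verifies the duality estimate $\|\rg-\overline{\rg}\|_{\Leb^{d'}(\Om)}\leq\BVbeta\|\nabla\rg\|_{\Leb^1+\Wdot^{-1,d'}(\Om)}$ on the domain and then invokes Theorem~\ref{thm:int_Lx=f_in_Lp} with $\B=\Leb^\infty\cap\Wdot^{1,d}_0(\Om,\RR^d)$ and $p=d$. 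Your write-up is in fact more explicit than the paper's, correctly isolating the domain version of the Bourgain--Brezis inequality as the only nontrivial ingredient.
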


\section{Bounded solution of $\curl \x=\ff\in \Leb^3_\#(\TT^3,\RR^3)$}\label{sec:curlU=F}
Let $\Leb^3_\#(\TT^3,\RR^3)$ denote the $\Leb^3$-subspace of  divergence-free 3-vectors with zero mean. We seek solutions of
\begin{equation}\label{eq:curlx=f}
\curl \x=\ff, \qquad \ff\in \Leb^3_\#(\TT^3,\RR^3),
\end{equation}
in an appropriate Banach space, $\x\in\B$. We appeal to the framework of hierarchical solutions in theorem \ref{thm:int_Lx=f_in_Lp}, where  $\P:\Leb^3(\TT^3,\RR^3)\mapsto \Leb^3(\TT^3,\RR^3)$ is the irrotational portion of Hodge decomposition with a dual, 
 $\Pdual\bg:=\nabla\Delta^{-1}\div\bg-\overline{\bg}$.
According to theorem \ref{thm:int_Lx=f_in_Lp},  we can construct hierarchical solutions, $\x\in \B$ of (\ref{eq:curlx=f}),
 provided (\ref{ass:int_Hp}) holds
\begin{equation}\label{ass:Hcurlp}
\|\bg-\Pdual\bg\|_{\Leb^{3/2}} \leq \BVbeta \dualnorm{\curl\bg}, \qquad \bg\in \Leb^{3/2}(\TT^3,\RR^3).
\end{equation}

Since $\|\bg-\Pdual\bg\|_{\Leb^{3/2}} \lesssim \|\curl\bg\|_{\Wdot^{-1,3/2}}$, 
we can construct hierarchical solutions of (\ref{eq:curlx=f}) in $\Wdot^{1,3}$. 
This has the same integrability as the  divergence-free solution of (\ref{eq:curlx=f}), 
$(-\Delta)^{-1}\curl \ff$. A more intricate question is whether (\ref{eq:curlx=f}) admits a uniformly bounded solution, since such a solution \emph{cannot} be constructed by a linear procedure. These solutions were constructed by Bourgain and Brezis in \cite[Corollary 8']{BB07}, 
which in turn imply the key {\it a priori} estimate,
\begin{equation}\label{eq:key_curl}
\|\bg-\Pdual\bg\|_{\Leb^{3/2}(\TT^3,\RR^3)} \leq \BVbeta \|\curl\bg\|_{L^1+\Wdot^{-1,3/2}(\TT^3,\RR^3)},
\qquad \forall \bg\in \Leb^{3/2}(\TT^3,\RR^3): \ \div\bg=\overline{\bg}=0. 
\end{equation}
Granted \eqref{eq:key_curl}, theorem \ref{thm:int_Lx=f_in_Lp} offers a simpler alternative to the construction in \cite{BB07} based on the following hierarchical decomposition.
\begin{cor}\label{cor:curlX=f}
The equation $\curl\x=\ff\in \Leb^3_\#(\TT^3,\RR^3)$,  admits a solution $\x\in\Leb^\infty \cap \Wdot^{1,3}(\TT^3,\RR^3)$,
\[
\|\x\|_{\Leb^\infty \cap \Wdot^{1,3}(\TT^3,\RR^3)} \leq  \BVgamma\|\ff\|_{\Leb^3(\TT^3,\RR^3)},
\]
which can be constructed by the (nonlinear) hierarchical expansion, $\x=\sum\u_j$,
\[
\u_{j+1}=\arginf{\u} \left\{\|\u\|_{\Leb^\infty \cap \Wdot^{1,3}}+\lami\two^j\big\|\ff-\curl\Big(\sum_{k=1}^j\u_k+\u\Big)\big\|^3_{\Leb^3(\TT^3,\RR^3)}\right\}, \quad j=0,1,\ldots,
\]
with sufficiently large $\lami\geq \frac{\BVbeta}{3}\|\ff\|^{-2}_{\Leb^3(\TT^3,\RR^3)}$.
\end{cor}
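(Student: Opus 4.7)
The plan is to deduce the corollary as a direct instance of Theorem~\ref{thm:int_Lx=f_in_Lp}, with the identifications $\T=\curl$, $p=3$, and Banach space $\B=\Leb^\infty\cap \Wdot^{1,3}(\RR^3,\RR^3)$; the projection $\P$ onto the irrotational part of Hodge's decomposition plays the role assigned to it in the abstract framework, and its kernel (solenoidal, zero-mean fields) does contain the range of $\curl$, so the compatibility hypothesis underlying \eqref{ass:int_Hp} is structurally in place. Granted these choices, the refinement step \eqref{eq:int_hie} specializes verbatim to the iteration written in the statement, and the a priori bound \eqref{eq:int_x_in_B} becomes the claimed estimate $\Bnorm{\x}\lesssim \BVbeta\|\ff\|_{\Leb^3}$.

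The one substantive hypothesis to verify is the duality estimate \eqref{ass:int_Hp}, which in the present setting is precisely \eqref{ass:Hcurlp}. To unpack it, I would first identify $\Bdual=\Leb^1+\Wdot^{-1,3/2}(\RR^3,\RR^3)$ (the dual of the intersection space is the sum of duals), and then observe that the formal adjoint of $\curl$ on $\RR^3$ is $\curl$ itself. The required estimate therefore reads
\begin{equation*}
\|\bg-\Pdual\bg\|_{\Leb^{3/2}}\;\leq\; \BVbeta\,\|\curl\bg\|_{\Leb^1+\Wdot^{-1,3/2}(\RR^3,\RR^3)}, \qquad \forall\, \bg\in \Leb^{3/2}(\RR^3,\RR^3).
\end{equation*}
Because $\bg-\Pdual\bg$ is, by construction, the solenoidal zero-mean component of $\bg$, both sides are unchanged if $\bg$ is replaced by this component. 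It therefore suffices to establish the inequality for $\bg$ satisfying $\div\bg=\overline{\bg}=0$, and in this reduced form it is exactly the Bourgain--Brezis inequality \eqref{eq:key_curl} recorded from \cite{BB07}.

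With \eqref{ass:Hcurlp} in hand, Theorem~\ref{thm:int_Lx=f_in_Lp} immediately produces the hierarchical expansion $\x=\sum_{j=1}^\infty\u_j$, the successive terms $\u_{j+1}$ being the minimizers of $\Bnorm{\u}+\lami\zeta^j\|\r_j-\curl\u\|^3_{\Leb^3}$ with $\r_j=\ff-\curl(\sum_{k\leq j}\u_k)$. The scaling $\lami\gtrsim \BVbeta/\|\ff\|^2_{\Leb^3}$ and the growth condition $\zeta>1$ are inherited directly from the hypotheses of Theorem~\ref{thm:int_Lx=f_in_Lp}, while existence of each individual minimizer is covered by the general characterization carried out in Section~\ref{sec:JTP}.

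The genuine analytic obstacle is hidden in the invocation of \eqref{eq:key_curl}: this is the hard input, requiring the critical-scaling machinery of Bourgain--Brezis in dimension three. Once this estimate is granted, however, the proof of the corollary is essentially bookkeeping---choosing $\B$ appropriately, identifying its dual as a sum space, verifying self-adjointness of $\curl$ under the natural pairing, and checking that the Hodge projection $\P$ aligns the abstract compatibility condition with the precise form of the Bourgain--Brezis inequality.
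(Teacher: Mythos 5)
Your proposal matches the paper's own (implicit) argument: the corollary is obtained by specializing Theorem~\ref{thm:int_Lx=f_in_Lp} to $\T=\curl$, $p=3$, $\B=\Leb^\infty\cap\Wdot^{1,3}(\RR^3,\RR^3)$, with the duality hypothesis \eqref{ass:Hcurlp} supplied by the Bourgain--Brezis estimate \eqref{eq:key_curl} after identifying $\Bdual=\Leb^1+\Wdot^{-1,3/2}$, using that $\curl$ is formally self-adjoint, and reducing to solenoidal zero-mean $\bg$. The paper states this chain of reductions without a separate proof, so your slightly more explicit bookkeeping is exactly its intended argument.
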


\section{Construction of hierarchical solutions for  $\T \x=f \in \Leb^p_\#(\Om)$}\label{sec:Lp}
\addtocontents{toc}{\protect\setcounter{tocdepth}{2}}
\subsection{A prototype example: hierarchical solution of $\div \x=f\in \Leb^2_\#(\TT^2)$}\label{subsec:peq2}

We begin our discussion on hierarchical constrictions with a two-dimensional prototype example of
\begin{equation}\label{eq:div2}
\div \x = f, \qquad f\in \Leb^2_\#(\TT^2):=\big\{\rg\in \Leb^2(\TT^2)\ \big| \ \int_{\TT^2}g(x) dx=0\big\}.
\end{equation}
Our starting point for the construction of a uniformly bounded solution of (\ref{eq:div2})
is a  decomposition of $f$, 
\begin{subequations}\label{eqs:Qa}
\begin{equation}\label{eq:diva}
f = \div\u_\i+\r_\i, \qquad f\in \Leb^2_\#(\TT^2),
\end{equation}
where $\bra \u_\i,\r_\i \ket$ is a 
minimizing pair of the functional, 
\begin{equation}\label{eq:Qdiva}
\bra \u_\i,\r_\i\ket = \arginf{\div\u+\r=f} \Big\{\|\u\|_{\Leb^\infty} + \lami \|\r\|^2_{\Leb^2}\Big\}.
\end{equation}
\end{subequations}
Here $\lami$ is a fixed parameter at our disposal:  if we choose $\lami$ large enough, 
$\displaystyle \lami >\frac{1}{2\|f\|_{\BV}}$,
then  according to lemma \ref{cor:caseii} below, (\ref{eq:Qdiva}) admits a 
minimizer, $\bra \u_\i,\r_\i \ket$, satisfying, 
\[
\|\r_\i\|_{\BV} =\frac{1}{2\lam_\i}.
\]
To proceed we invoke the isoperimetric Gagliardo-Nirenberg inequality, which states that there exists $\BVbeta>0$ (any $\BVbeta\geq 1/\sqrt{4\pi}$ will do), such that for all bounded variation $g$'s with zero mean,
\begin{equation}\label{eq:BV}
\|g\|_{\Leb^2} \leq\BVbeta \|g\|_{\BV}, \qquad \int_{\TT^2}g(x)dx=0
\end{equation}
Since $f$ has a zero mean so does the residual $\r_\i$ and (\ref{eq:BV}) yields
\[
\|\r_\i\|_{\Leb^2}  \leq \BVbeta\|\r_\i\|_{\BV} =\frac{\BVbeta}{2\lam_\i}.
\]
We conclude that the residual $\r_\i \in \Leb^2_\#(\TT^2)$, and we can therefore implement  the same variational decomposition  of $f$ in  (\ref{eqs:Qa}), and use it to decompose $\r_\i$ with  scale $\displaystyle \lam=\lamii>\lami=\frac{1}{2\|\r_\i\|_{\BV}}$. This  yields
\[
\r_\i=\div\u_\ii+\r_\ii, \qquad \bra \u_\ii,\r_\ii\ket = \arginf{\div\u+\r=\r_\i} \Big\{\|\u\|_{\Leb^\infty} + \lamii \|\r\|^2_{\Leb^2}\Big\}.
\]

Combining this with (\ref{eq:diva}) we obtain 
$f=\div \x_\ii +\r_\ii$, where $\x_\ii:=\u_\i+\u_\ii$  is viewed as an improved \emph{approximate solution} of (\ref{eq:div2}) in the sense that it has a smaller  residual, $\r_\ii$, 
\[
\|\r_\ii\|_{\Leb^2} \leq \BVbeta\|\r_\ii\|_{\BV} =\frac{\BVbeta}{2\lamii}.
\] 
when compared with the previous residual $\BVbeta\|\r_\i\|_{BV} =\frac{\BVbeta}{2\lami}$.
This process can be repeated: if $\r_j\in \Leb^2_\#(\TT^2)$ is the residual at step $j$, then we decompose it

\begin{subequations}\label{eqs:Qj}
\begin{equation}\label{eq:Qja}
\r_j=\div\u_{j+1}+\r_{j+1},
\end{equation}
where $\bra\u_{j+1},\r_{j+1}\ket$ is a minimizing pair of
\begin{equation}\label{eq:Qjb}
\bra \u_{j+1},\r_{j+1}\ket = \arginf{\div\u+\r=\r_j} \Big\{\|\u\|_{\Leb^\infty} + \lam_{j+1} \|\r\|^2_{\Leb^2}\Big\}, \qquad j=0,1,\ldots. 
\end{equation}
\end{subequations}
For $j=0$, the decomposition (\ref{eqs:Qj}) is interpreted as (\ref{eq:diva}) by setting $\r_0:=f$.
Note that the recursive decomposition (\ref{eq:Qja}) depends on the invariance of 
$\r_j\in \Leb^2_\#(\TT^2)$: if $\r_j$ has a zero mean then so does $\r_{j+1}$, and by (\ref{eq:BV}) $\r_{j+1}\in \Leb^2_\#(\TT^2)$.  
The iterative process depends on a sequence of increasing scales, 
$ \lami < \lamii < \ldots \lam_{j+1}$, which are yet to be determined. 

The telescoping sum of the first $k$ steps in (\ref{eq:Qja}) yields an improved approximate solution, 
$\x_k:=\sum_{j=\i}^k \u_j$:
\begin{equation}\label{eq:ites}
f = \div \x_k +\r_k, \qquad \|\r_k\|_{\Leb^2} \leq \BVbeta \|\r_k\|_{\BV} = \frac{\BVbeta}{2\lam_k}\downarrow 0, \qquad k=1,2,\ldots.
\end{equation}
The  key question is  whether the $\x_k$'s remain uniformly bounded, and it is here that we use the freedom in choosing the scaling parameters $\lam_k$: comparing the minimizing pair $[\u_{j+1},\r_{j+1}]$ of (\ref{eq:Qjb}) with the trivial pair $[\u\equiv 0, \r_j]$ implies, in view of (\ref{eq:ites}),
\begin{equation}\label{eq:iter}
\|\u_{j+1}\|_{\Leb^\infty}+ \lam_{j+1}\|\r_{j+1}\|_{\Leb^2}^2   \leq \lam_{j+1} \|\r_j\|_{\Leb^2}^2 \leq \left\{\begin{array}{ll} \lami\|f\|_{\Leb^2}^2, \quad & j=0, \\ \\
{\displaystyle \frac{\BVbeta^2\lam_{j+1}}{4\lam_j^{2}}}, \quad & j=1,2,\ldots . \end{array}\right.
\end{equation}

We conclude that by choosing a sufficiently fast increasing $\lam_j$'s such that $\sum_j \lam_{j+1}\lam^{-2}_j< \infty$, then the approximate solutions $\x_k=\sum^k_1 \u_j$ form a Cauchy sequence in $\Leb^\infty$ whose limit, $\x=\sum^\infty_1 \u_j$, satisfies the following.

\begin{thm}\label{thm:main2}
Fix $\BVbeta$ such that \eqref{eq:BV} holds. Then, for any given $f\in \Leb^2_\#(\TT^2)$, there exists a uniformly bounded solution of \eqref{eq:div2},
\begin{equation}\label{eq:main}
\div \x= f, \qquad \|\x\|_{\Leb^\infty} \leq 2\BVbeta \|f\|_{\Leb^2}.
\end{equation} 
The solution $\x$ is given by $\x=\sum_{j=\i}^\infty \u_j$, where the $\{\u_j\}$'s are constructed recursively as  minimizers of 
\begin{equation}\label{eq:hira}
 \bra \u_{j+1},\r_{j+1}\ket = \arginf{\div\u+\r=\r_j} \Big\{\|\u\|_{\Leb^\infty} + \lami \two^j \|\r\|^2_{\Leb^2}\Big\}, \qquad \r_0:=f, \quad \lami=\frac{\BVbeta}{\|f\|_{\Leb^2}}.
\end{equation}
\end{thm}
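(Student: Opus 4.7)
The plan is to formalize the iterative construction already sketched in the paragraphs preceding the statement, with the specific choices $\lami=\BVbeta/\|f\|_{\Leb^2}$ and $\lam_{j+1}=\lami 2^j$, and then to track constants carefully through the telescoping so as to land on the explicit bound $\|\x\|_{\Leb^\infty}\le 2\BVbeta\|f\|_{\Leb^2}$.

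First I would verify that the recursion is well-posed. Appealing to lemma~\ref{cor:caseii}, I need to check inductively the hypothesis $\lam_{j+1}>1/(2\|\r_j\|_{\BV})$ which guarantees the existence of a minimizer $[\u_{j+1},\r_{j+1}]$ of \eqref{eq:Qjb} satisfying the extremal identity $\|\r_{j+1}\|_{\BV}=1/(2\lam_{j+1})$. For $j=0$ this reduces to $\lami>1/(2\|f\|_{\BV})$, which follows from the Gagliardo--Nirenberg inequality \eqref{eq:BV} applied to $f$: $\|f\|_{\BV}\ge \|f\|_{\Leb^2}/\BVbeta$, so $\lami=\BVbeta/\|f\|_{\Leb^2}>1/(2\|f\|_{\BV})$. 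For the inductive step, the identity $\|\r_j\|_{\BV}=1/(2\lam_j)$ combined with $\lam_{j+1}=2\lam_j$ gives $\lam_{j+1}>\lam_j=1/(2\|\r_j\|_{\BV})$. One also checks that $\r_j\in \Leb^2_\#(\TT^2)$ persists, since each $\div\u_{j+1}$ has zero mean on the torus, so $\r_{j+1}=\r_j-\div\u_{j+1}$ inherits the zero-mean property and the Gagliardo--Nirenberg inequality then gives $\|\r_j\|_{\Leb^2}\le \BVbeta\|\r_j\|_{\BV}=\BVbeta/(2\lam_j)$.

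Second, I would extract the energy estimate by testing the minimization \eqref{eq:Qjb} against the trivial competitor $[\u\equiv 0,\r_j]$, yielding
\[
\|\u_{j+1}\|_{\Leb^\infty}+\lam_{j+1}\|\r_{j+1}\|_{\Leb^2}^2\le \lam_{j+1}\|\r_j\|_{\Leb^2}^2.
\]
Plugging in the residual bounds gives $\|\u_1\|_{\Leb^\infty}\le \lami\|f\|_{\Leb^2}^2=\BVbeta\|f\|_{\Leb^2}$, while for $j\ge 1$ the choice $\lam_{j+1}=\lami 2^j$, $\lam_j=\lami 2^{j-1}$ yields
\[
\|\u_{j+1}\|_{\Leb^\infty}\le \frac{\BVbeta^2\lam_{j+1}}{4\lam_j^2}=\frac{\BVbeta^2}{\lami}\,2^{-j}=\BVbeta\|f\|_{\Leb^2}\,2^{-j}.
\]
Summing the geometric series shows that the partial sums $\x_k:=\sum_{j=1}^k\u_j$ form a Cauchy sequence in $\Leb^\infty(\TT^2,\RR^2)$ with $\sum_{j\ge 1}\|\u_j\|_{\Leb^\infty}\le 2\BVbeta\|f\|_{\Leb^2}$, so the limit $\x=\sum_{j=1}^\infty \u_j$ satisfies the desired bound.

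Finally, I would pass to the limit in the telescoping identity $f=\div\x_k+\r_k$ using $\|\r_k\|_{\Leb^2}\le \BVbeta/(2\lami 2^{k-1})\to 0$, which gives $\div\x=f$ distributionally (and in fact in $\Leb^2$). The only genuinely delicate point — the ``main obstacle'' — is the initial calibration of $\lami$: one must choose it large enough to activate the minimizer's extremal identity of lemma~\ref{cor:caseii}, yet small enough that $\lam_1\|f\|_{\Leb^2}^2$ does not inflate the first term beyond $\BVbeta\|f\|_{\Leb^2}$. The choice $\lami=\BVbeta/\|f\|_{\Leb^2}$ is essentially forced by balancing these two requirements, and once it is fixed the dyadic scaling $\zeta=2$ makes the telescoping sum add up to exactly $2\BVbeta\|f\|_{\Leb^2}$.
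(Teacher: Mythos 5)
Your proposal is correct and follows essentially the same route as the paper's proof: comparing the minimizer with the trivial competitor $[\u\equiv 0,\r_j]$, using the extremal identity of lemma~\ref{cor:caseii} together with the Gagliardo--Nirenberg bound \eqref{eq:BV} to control the residuals, summing the dyadic geometric series, and calibrating $\lami=\BVbeta/\|f\|_{\Leb^2}$ to obtain the constant $2\BVbeta$. Your verification of admissibility at each step and the distributional passage to the limit are slightly more detailed than, but equivalent to, the paper's closed-graph argument.
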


\begin{proof}
With $\lam_j=\lami \two^{j-1}$ we have $\|\x_k-\x_\ell\|_{\Leb^\infty} \lesssim \sum \lam_{j+1}\lam_j^{-2} \lesssim \two^{-\ell}, \ k>\ell\gg1$. Let $\x$ be the  limit of the Cauchy sequence $\{\x_k\}$ then $ \|\x_j - \x\|_{\Leb^\infty}+\|\div\x_j-f\|_{\Leb^2}\lesssim \two^{-j} \rightarrow 0$, and since $\div$ has a closed graph on its domain ${D}:=\{\u\in \Leb^\infty: \ \div\u\in \Leb^2(\TT^2)\}$, it follows that $\div\x=f$. By (\ref{eq:iter}) we have
\[
\|\x\|_{\Leb^\infty} \leq \sum_{j=\i}^\infty \|\u_j\|_{\Leb^\infty}
\leq \lami\|f\|_{\Leb^2}^2 + \frac{\BVbeta^2}{4\lami}\sum_{j=\ii}^\infty\frac{1}{\two^{j-3}} =\lami \|f\|_{\Leb^2}^2 +  \frac{\BVbeta^2}{\lam_\i}. 
\]
Here $\displaystyle \lami> \frac{1}{2\|f\|_{\BV}}$ is a free parameter at our disposal:  we choose   $\lami := \BVbeta/ \|f\|_{\Leb^2}$ which by (\ref{eq:BV})  is admissible, 
$\displaystyle \lami = \frac{\BVbeta}{\|f\|_{\Leb^2}}  > \frac{1}{2\|f\|_{\BV}}$,
and  \eqref{eq:main} follows.
\end{proof}

\begin{rem}[Energy decomposition]\label{re:energy} A telescoping summation of the left inequality of (\ref{eq:iter}) yields 
\[
\sum_{j=\i}^\infty \frac{1}{\lam_j}\|\u_j\|_{\Leb^\infty} \leq \|f\|_{\Leb^2}^2;
\]
setting $\displaystyle \lam_j=\frac{\beta \two^{j-1}}{2\|f\|_{\Leb^2}} $ we conclude the ``energy bound"
\begin{equation}\label{eq:energy2}
\sum_{j=\i}^\infty \frac{1}{\two^{j-1}}\|\u_j\|_{\Leb^\infty}\leq\frac{\beta}{2}\|f\|_{\Leb^2}.
\end{equation}
In fact, a precise energy \emph{equality} can be formulated in this case, using the characterization of the minimizing pair (consult theorem \ref{thm:thm1} below), $2(\r_{j+1},\div\u_{j+1})=\|\u_{j+1}\|_{\Leb^\infty}/\lam_{j+1}$: by squaring the refinement step
$\r_{j}=\r_{j+1}+\div\u_{j+1}$  we find
\[
\|\r_j\|^2_{\Leb^2} - \|\r_{j+1}\|^2_{\Leb^2} = 2 (\r_{j+1},\div\u_{j+1}) + \|\div\u_{j+1}\|^2_{\Leb^2}=  \frac{1}{\lam_{j+1}}\|\u_{j+1}\|_{\Leb^\infty} + \|\div\u_{j+1}\|^2_{\Leb^2}.
\]
A telescoping sum of the last  equality yields
\begin{cor}\label{cor:L2energy}
Let $\x=\sum_{j=1}^\infty \u_j\in \Leb^\infty$ be a hierarchical solution of $\div\x=f, \ f\in\Leb^2_\#(\TT^2)$. Then 
\begin{equation}\label{eq:div_L2energy}
\frac{1}{\lami}\sum_{j=\i}^\infty \frac{1}{\two^{j-1}}\|\u_{j}\|_{\Leb^\infty} + \sum_{j=\i}^{\infty}\|\div\u_{j}\|^2_{\Leb^2_\#(\TT^2)} = \|f\|^2_{\Leb^2_\#(\TT^2)}, \qquad \lam_1=\frac{\beta}{2\|f\|_{\Leb^2}} 
\end{equation}
\end{cor}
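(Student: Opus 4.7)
The plan is to execute rigorously the telescoping argument already sketched in Remark~\ref{re:energy}. The only substantive ingredient is the optimality condition satisfied by the minimizing pair $[\u_{j+1},\r_{j+1}]$ of \eqref{eq:Qjb}. As asserted in the remark and made precise in the forthcoming Theorem~\ref{thm:thm1} of Section~\ref{sec:JTP}, this optimality takes the form
$$2\lam_{j+1}\,(\r_{j+1},\div\u_{j+1})_{\Leb^2(\TT^2)} \;=\; \|\u_{j+1}\|_{\Leb^\infty}, \qquad j=0,1,2,\ldots.$$
The derivation amounts to pairing the residual constraint $\r=\r_j-\div\u$ against a sub-gradient of the non-smooth $\Leb^\infty$-norm at $\u_{j+1}$: identifying the dual object $-2\lam_{j+1}\nabla\r_{j+1}$ as a sub-gradient lying in the unit ball of $\Leb^1$ gives the displayed duality equality. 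This is the only non-algebraic step and will be the main obstacle; I would defer its verification to the general characterization machinery of Section~\ref{sec:JTP}.

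Granted the duality relation, I square the refinement step $\r_j=\r_{j+1}+\div\u_{j+1}$ in $\Leb^2(\TT^2)$ and substitute to obtain
$$\|\r_j\|_{\Leb^2}^2-\|\r_{j+1}\|_{\Leb^2}^2 \;=\; 2(\r_{j+1},\div\u_{j+1})_{\Leb^2}+\|\div\u_{j+1}\|_{\Leb^2}^2 \;=\; \frac{1}{\lam_{j+1}}\|\u_{j+1}\|_{\Leb^\infty}+\|\div\u_{j+1}\|_{\Leb^2}^2.$$
Both summands on the right are non-negative, so telescoping over $j=0,1,\ldots,N-1$ presents no convergence difficulty. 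Using $\r_0=f$ together with the residual decay $\|\r_N\|_{\Leb^2}\le \BVbeta/(2\lam_N)\to 0$ recorded in \eqref{eq:ites}, I pass to the limit $N\to\infty$ and arrive at
$$\|f\|_{\Leb^2}^2 \;=\; \sum_{j=0}^{\infty}\frac{1}{\lam_{j+1}}\|\u_{j+1}\|_{\Leb^\infty}+\sum_{j=0}^{\infty}\|\div\u_{j+1}\|_{\Leb^2}^2.$$

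Finally, I substitute the dyadic scales $\lam_{j+1}=\lami\two^j$ specified in Theorem~\ref{thm:main2} and reindex $k:=j+1$ in the first sum; the identity above then rearranges into exactly the claimed energy equality \eqref{eq:div_L2energy}. In short, once the sub-gradient characterization of the $\Leb^\infty/\Leb^2$ minimizer is in hand, the energy equality is a one-line telescoping computation from the recursive refinement step; the conceptual work is entirely concentrated in justifying that characterization.
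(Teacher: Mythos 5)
Your proposal follows exactly the paper's own route: invoke the extremal-pair characterization $2\lam_{j+1}(\r_{j+1},\div\u_{j+1})=\|\u_{j+1}\|_{\Leb^\infty}$ from Theorem~\ref{thm:thm1} (via Lemma~\ref{cor:caseii}), square the refinement step $\r_j=\r_{j+1}+\div\u_{j+1}$, and telescope using the residual decay \eqref{eq:ites}. Your version is in fact slightly more careful about the passage to the limit and about the summation indices (your second sum correctly runs over all $\|\div\u_k\|^2$, $k\geq1$, whereas the corollary as printed appears to start it one term late), so the argument is correct and essentially identical to the paper's.
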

\end{rem}

\noindent
We mention two examples related to the two-dimensional setup of theorem \ref{thm:main2}.
 
\subsubsection{Oscillations and image processing}
As noted earlier, there exists no linear construction of solutions of  \eqref{eq:div2} for \emph{general} $f\in \Leb^2$. Yet,  for the `slightly smaller' Lorenz space, $\Leb^{2,1}$, we have 
\[
\nabla\Delta^{-1}f\in \Leb^\infty, \qquad f\in\Leb^{2,1}_\#(\TT^2).
\] 
(we note in passing that  $\Leb^{2,1}$ is a limiting case for the linearity of $f\mapsto \x$ to survive  the $\Leb^{2,\infty}$-based nonlinearity result argued in the proof of \cite[proposition 2]{BB03}). 
Thus, the nonlinear aspect of constructing hierarchical solutions for \eqref{eq:div2}
becomes essential for highly oscillatory functions such that $f\in\Leb^2\backslash\Leb^{2,1}$ (and in particular, $f\notin BV(\TT^2)$). Such $f$'s are encountered in image processing in the form of noise, texture, and blurry images, \cite{Me02,BCM05}. 
Hierarchical decompositions in this context of images  were introduced by us in \cite{TNV04} and were found to be  effective tools in image de-noising, image de-blurring and image registration, \cite{BCM05,LPSX06,PL07,TNV08,HRC10,AXRNW13,TH13}, including graph-based signals \cite{HLTE10,HLE13}. Here, we are given a  noisy and possibly blurry observed image, $f=\T\x+\r \in \Leb^2(\RR^2)$,  and the purpose is to recover a faithful description of the underlying `clean' image,  $\x \sim ``\T^{-1}" f$, by de-noising $\r$ and de-blurring $\T$. The inverse $``\T^{-1}" f$ should  be properly interpreted, say,  in the smaller space $BV(\RR^2)\subset \Leb^2(\RR^2)$ which is known to be well-adapted to represent edges.  The resulting inverse problem can solved by corresponding variational problem of \cite{ROF92,CL95,CL97} 
\begin{equation}\label{eq:tik}
\bra \u,\r\ket=\arginf{\T \u+\r=f} \big\{\|\u\|_{BV} + \lam \|\r\|_{\Leb^2(\RR^2)}^2\big\},
\end{equation}
which is a special case of \emph{Tikhonov-regularization}, \cite{TA77,Mo84,Mo93}.  
The   $(BV,\Leb^2)$-hierarchical decomposition corresponding to (\ref{eq:tik})
reads, \cite{TNV04,TNV08},
\begin{equation}\label{eq:BVL2}
f\eqsim \T\x_m, \quad \x_m=\sum_{j=1}^m \u_j, \qquad \bra \u_{j+1},\r_{j+1}\ket=\arginf{\T\u+\r=\r_j}\left\{\|\u\|_{BV}+\lami 2^j\|\r\|^2_{L^2}\right\}.
\end{equation} 

The oscillatory nature of noise and texture in images was addressed by  Y. Meyer who advocated,  \cite{Me02}, to replace 
$\Leb^2$ with the larger space of ``images" $G:=\{f \ | \div \u=f, \u\in \Leb^\infty\}$. The equation $\div\u=f$  arises here with \emph{one-signed} measure $f$'s, and its $\Leb^\infty$ solutions were characterized in \cite[\S1.14]{Me02},\cite{PT08}: the space 
$G_+$ coincides with Morrey space $M_+^2(\Om)$:
\[
M^2(\Om)=\big\{ \mu\in {\mathcal M} \  \big| \int_{B_r}d\mu \lesssim r, \quad \forall B_r \subset \Om\big\}.
\]
For one-signed measure, $M_+^2(\Om)$ coincides with Besov space $\dot{B}^{-1,\infty}_{\infty}$. The corresponding 
Meyer's energy functional then reads, $\displaystyle \bra \u,\r\ket=\arginf{\T \u+\r=f} \big\{\|\u\|_{BV(\Om)} + \lam \|\r\|_{\dot{B}^{-1,\infty}_{\infty}(\Om)}\big\}$; numerical simulations with this model are found in \cite{VO04}.

\subsubsection{$\Leb^1(\TT^2)$-bounds and $\Hdot^{-1}(\TT^2)$-compactness}
Here is a simple application  of theorem \ref{thm:main2}.
Let $f\in \Hdot^{-1}(\TT^2)$ be given. For arbitrary $\rg\in \Hdot^{1}(\TT^2)$  we have
$\xi_j\widehat{\rg}(\xi)\in \Leb^2_\#(\TT^2)$, and  by theorem \ref{thm:main2},
there exist  bounded $\x_{ij}\in\Leb^\infty(\TT^2)$, such that

\begin{displaymath}
\begin{array}{ll}
\left\{
\begin{array}{ccc}
\xi_1\widehat{\rg}(\xi) &= &\xi_1\widehat{\x}_{11}(\xi) + \xi_2\widehat{\x}_{12}(\xi),\\
\xi_2\widehat{\rg}(\xi) &= &\xi_1\widehat{\x}_{21}(\xi) + \xi_2\widehat{\x}_{22}(\xi),
\end{array}
\right. & \quad \|\x_{ij}\|_{\Leb^\infty} \lesssim \|\rg\|_{\Hdot^{1}(\TT^2)}.
\end{array}
\end{displaymath}
Thus, expressed in terms of the Riesz transforms, $\widehat{R_j\psi}(\xi):=\widehat{\psi}(\xi)\xi_j/|\xi|$, we have
\[
\rg=\frac{1}{2}\left(\x_{11}+\x_{22}\right) +\frac{1}{2}\left(R_1^2-R_2^2\right)\left(\x_{11}-\x_{22}\right) + R_1R_2\left(\x_{12}+\x_{21}\right);
\] 
Since $R_1^2-R_2^2$ and $R_1R_2$ agree up to rotation, we conclude that: every $\rg\in \Hdot^1(\TT^2)$ can be written as the sum 
\[
 \rg=\x_1 + R_1R_2\x_2, \quad \|\x_1\|_{\Leb^\infty}+\|\x_2\|_{\Leb^\infty} \lesssim  \|\rg\|_{\Hdot^{1}(\TT^2)} \quad
\text{for \ all \ } \rg\in \Hdot^1(\TT^2).
\]
Here, $U_1,U_2$ are given by linear combination of the $U_{ij}$'s in their Cartesian and their rotated coordinates. The  last representation shows that although an $\Leb^1(\TT^2)$-bound of $f$ does not imply $f\in \Hdot^{-1}(\TT^2)$, then $f$ does belong to $\Hdot^{-1}$ if $f$ \emph{and} its repeated Riesz transform, $R_1R_2f$, are $L^1$-bounded. 
\begin{cor}\label{cor:Hmone}
The following bound holds
\begin{equation}\label{eq:tmp345}
\|f\|_{\Hdot^{-1}(\TT^2)} \lesssim \|f\|_{L^1(\TT^2)} + \|R_1R_2f\|_{L^1(\TT^2)}.
\end{equation}
\end{cor}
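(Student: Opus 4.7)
The plan is to read the corollary as the dual statement of the $\Hdot^1$-decomposition established just before it, combined with the standard $\Leb^1$--$\Leb^\infty$ pairing. Since $\Hdot^{-1}(\TT^2)$ is the dual of $\Hdot^1(\TT^2)$,
\[
\|f\|_{\Hdot^{-1}(\TT^2)} = \sup\big\{\langle f,\rg\rangle \ \big| \ \rg\in \Hdot^1(\TT^2), \ \|\rg\|_{\Hdot^1}\leq 1\big\},
\]
so it suffices to control $\langle f,\rg\rangle$ uniformly by $\|f\|_{\Leb^1}+\|R_1R_2 f\|_{\Leb^1}$ for each such $\rg$.

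Fix an admissible $\rg$. The representation derived in the paragraph preceding the corollary — itself a consequence of theorem \ref{thm:main2} applied to $\xi_j\widehat{\rg}(\xi)\in \Leb^2_\#(\TT^2)$ (two $\Leb^\infty$ potentials per component, together with the identity that $R_1^2-R_2^2$ and $R_1R_2$ coincide up to rotation) — produces $\x_1,\x_2\in \Leb^\infty(\TT^2)$ with
\[
\rg = \x_1 + R_1R_2\x_2, \qquad \|\x_1\|_{\Leb^\infty}+\|\x_2\|_{\Leb^\infty}\lesssim \|\rg\|_{\Hdot^1}.
\]
Pairing $f$ against this representation and using the self-adjointness of $R_1R_2$ on mean-zero $\Leb^2(\TT^2)$ (the symbol $-\xi_1\xi_2/|\xi|^2$ is real and even) gives
\[
\langle f,\rg\rangle = \langle f,\x_1\rangle + \langle R_1R_2 f,\x_2\rangle.
\]
Applying $\Leb^1$--$\Leb^\infty$ duality termwise,
\[
|\langle f,\rg\rangle| \leq \|f\|_{\Leb^1}\|\x_1\|_{\Leb^\infty} + \|R_1R_2 f\|_{\Leb^1}\|\x_2\|_{\Leb^\infty} \lesssim \big(\|f\|_{\Leb^1}+\|R_1R_2 f\|_{\Leb^1}\big)\|\rg\|_{\Hdot^1},
\]
and taking the supremum over admissible $\rg$ yields \eqref{eq:tmp345}.

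The substantive work has already been done in theorem \ref{thm:main2}, which converts the $\Leb^2_\#$-data $\xi_j\widehat{\rg}$ into bounded potentials; the remaining effort is bookkeeping. The one mild subtlety is making sense of the pairing $\langle R_1R_2 f,\x_2\rangle$ when $f$ is merely in $\Leb^1$: it is enough to first assume $f$ smooth (so Riesz transforms act pointwise and duality pairings are classical), derive the inequality with the implicit constant independent of the smoothing, and then conclude by a routine mollification $f\ast\phi_\varepsilon\to f$ coupled with the closedness of the right-hand-side norm. I do not foresee any genuine obstacle beyond this standard approximation argument; the hard content is entirely encoded in the nonlinear $L^\infty$-valued decomposition of theorem \ref{thm:main2} supplied earlier.
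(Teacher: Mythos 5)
Your argument is exactly the paper's: the corollary is obtained by dualizing the representation $\rg=\x_1+R_1R_2\x_2$, $\x_1,\x_2\in\Leb^\infty(\TT^2)$ with $\|\x_1\|_{\Leb^\infty}+\|\x_2\|_{\Leb^\infty}\lesssim\|\rg\|_{\Hdot^1}$, which the paper derives from Theorem~\ref{thm:main2} applied to $\xi_j\widehat{\rg}\in\Leb^2_\#(\TT^2)$, and then pairing $f$ against it via $\Leb^1$--$\Leb^\infty$ duality and the self-adjointness of $R_1R_2$. The paper leaves this last pairing step implicit; your write-up, including the routine mollification to justify moving $R_1R_2$ onto $f\in\Leb^1$, is a correct completion of the same route.
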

As an example, consider a family of divergence-free 2-vector fields, 
$\ueps(t,\cdot)\in \Leb^2(\TT^2,\RR^2)$, which are approximate solutions of two-dimensional incompressible Euler's equations. 
One is interested in their convergence to a proper weak solution, with no concentration effects, \cite{DM87}. It was shown in \cite{LNT00} that $\{\ueps\}$ converges to such a weak solution  if the vorticity, $\omeps(t\cdot):=\partial_1 \uepsb(t,\cdot)-\partial_2\uepsa(t,\cdot)$, is compactly embedded in $H^{-1}(\TT^2)$. By corollary \ref{cor:Hmone}, $H^{-1}$-compactness holds if $\{R_1R_2\omeps(t,\cdot)\} \hookrightarrow \Leb^1(\TT^2)$; consult \cite{Ve92}.

\subsection{Hierarchical solutions for  $\T \x=f \in \Leb^p_\#(\Om)$: approximate solutions}

We turn our attention to the construction of hierarchical solutions for equations of the general form 
 \begin{equation}\label{eq:Lxf}
\T \x= f, \qquad f\in \OPhid(\Om), \qquad 1<p<\infty.
\end{equation}
A solution $\x$ is sought in a Banach space $\B:=\{ \x\,:\, \Bnorm{\x}< \infty \}$.
The general framework,  involving two linear operators, $\T$ and $\P$, is outlined below.

\medskip
The linear operator $\T$ is densely defined on  $\B$ with a closed range in $\OPhid := \OPhi \cap \mbox{Ker}(\P)$ with appropriate $\P:\OPhi\mapsto \OPhi$. We let $\Tdual:\OPsi \mapsto \Bdual$ denote the formal dual of $\T$, acting on $\OPsi$  
with the natural pairing (effectively, $\Tdual$ is acting on $\OPsid:=\OPsi\cap\mbox{Ker}(\P)$, since $R(\Pdual)$ is in the null of $\Tdual$)
\[
\langle \Tdual\rg,\u\rangle =(\rg,\T\u), \qquad \rg\in\OPsi, \ \u\in\B,
\]
and let $\dualnorm{\cdot}$ denote the dual norm 
\[
\dualnorm{\Tdual\rg}:=\sup_{\u\neq 0}\frac{\langle\Tdual\rg,\u\rangle}{\Bnorm{\u}}, \qquad \rg\in \OPsi.
\]

We begin by constructing an \emph{approximate solution} of (\ref{eq:Lxf}), $\x_{\lam}: \T \x_{\lam} \approx f$, such that the residual 
$\r_{\lam}:=f-\T \x_{\lam}$  is driven to be small by a proper choice of a \emph{scaling parameter} $\lam$  at our disposal. The approximate solution is obtained in terms of minimizers of the variational problem,
\begin{equation}\label{eq:blur}
 \JT(f,\lam):= \inf_{\T\u+\r=f} 
\Big\{ \Bnorm{\u}+ \lambda\Lp{\r} \,: \, \u\in \B, \ \r\in \OPhid\Big\}.
\end{equation}
In theorem \ref{thm:thm1} below we show if $\lambda$ is chosen  sufficiently large, 
\begin{equation}\label{eq:const}
\lam  >\frac{1}{p\dualnorm{\Lin{f}}}, \qquad \myphi{f}:=\mbox{sgn}(f)|f|^{p-1},
\end{equation}
then the functional $\JT(f,\lam)$ in (\ref{eq:blur}) admits a 
minimizer, $\u=\u_{\lam}$,  such that the size of the residual, $\r_{\lam}:=f-\T \u_{\lam}$, is dictated by the dual statement
\begin{equation}\label{eq:dual}
\dualnorm{\Lin{\r_\lam}}=\frac{1}{p\lam }.
\end{equation}

Fix the scale $\lam=\lami >1/p\dualnorm{\Lin{f}}$. We  construct an approximate solution, $\T \x_{\i} \approx f, \ \x_{\i}:=\u_{\i}$, where $\u_{\i}$ is  a 
minimizer of $\JT(f, \lami)$,
\[
f=\T \u_{\i}+\r_{\i}, \qquad \bra \u_{\i},\r_{\i}\ket=\arginf{\T \u+\r=f} \JT(f, \lami)
\]
Borrowing the terminology from image processing we note that the corresponding residual $\r_{\i}$ contains `small' features which were left out of $\u_{\i}$.  Of course,  whatever is interpreted as `small' features  at a given $\lami$-scale, may contain  significant 
features when viewed under a refined scale, say $\lamii>\lami$. To this end we \emph{assume} that the residual $\r_{\i}\in \OPhid$
so that we can repeat the $\JT$-decomposition of $\r_{\i}$, this time  at the refined scale $\lamii$:
\[
\r_{\i}=\T \u_{\ii}+\r_{\ii}, \qquad 
\bra \u_{\ii},\r_{\ii}\ket=\arginf{\T \u+\r=\r_{\i}} \JT(\r_{\i}, \lamii). 
\]

Combining the last two steps 
we arrive at a better two-scale representation of $\x$ given by 
$\x_\ii := \u_{\i}+\u_{\ii}$, as an improved approximate  solution of $\T \x_\ii \approx f$.
Features below scale $\lamii$ remain unresolved in $\x_\ii$, but the process  can be continued  by successive application of the  refinement step,
\begin{equation}\label{eq:hrb}
\r_j = \T \u_{j+1}+\r_{j+1}, \quad \bra \u_{j+1},\r_{j+1}\ket:= \arginf{\T \u+\r=\r_j} \JT(\r_j, \lam_{j+1}),  \quad j=\i,\ii,... .
\end{equation}
To enable this process we require   the residuals $\r_j$ to remain in $\OPhid$. In view of the dual bound (\ref{eq:dual}), we therefore make the following

\noindent
{\bf Assumption} ({A closure bound}){\bf .} 
{\it There exists  a constant} $\BVbetap=\BVbetap(p,d) <\infty$ {\it such  that the following a priori estimate holds}
\begin{equation}\label{ass:closure}
  \Lp{\rg} \leq  \BVbetap \dualnorm{\Tdual\myphi{\rg}}^{p'}, \qquad \myphi{\rg}=\mbox{sgn}(\rg)|\rg|^{p-1}.
\end{equation}
We postpone the discussion of this bound to theorem \ref{thm:L2-data} below and we continue with the generic  hierarchical step where   $\bra \u_{j+1},\r_{j+1}\ket$ is constructed as a 
minimizing pair of $\JT(\r_j,\lam_{j+1})$: since this minimizer is characterized by satisfying $\dualnorm{\Tdual\myphi{\r_{j+1}}}=1/p\lam_{j+1}$, then the closure bound  \eqref{ass:closure} implies that   $\r_{j+1}\in \Leb^p$; moreover, since $\r_j$ and $R(\T)$ are in $\mbox{Ker}(\P)$ then, 
\[
\r_{j+1}= r_j-\T\u_{j+1} \in \mbox{Ker}(\P),
\]
and we conclude that $\r_{j+1}\in \OPhid$. In this manner, the iteration step 
$\bra \u_j,\r_j\ket \mapsto \bra\u_{j+1},\r_{j+1}\ket$, 
is well-defined on $\B\times \OPhid$.
After $k$  such steps we have, 
\begin{eqnarray}
f&=&\T\u_{\i}+\r_{\i}= \nonumber \\
&=&\T\u_{\i}+\T\u_{\ii}+\r_{\ii} =\nonumber \\
&=& \ldots  \ldots \qquad \qquad \ \ \ = \nonumber \\
&=&\T\u_{\i}+\T\u_{\ii}+ \dots +\T\u_{k}+\r_{k}. \nonumber
\end{eqnarray}

We end up with a multiscale \emph{hierarchical representation} of an approximate solution of (\ref{eq:Lxf}) $\x_k:=\sum_{j=\i}^k \u_j\in \B$ such that $\T \x_k \eqsim f$.
Here, the approximate equality $\eqsim$  is interpreted as the  convergence of the residuals,
\[
\dualnorm{\Lin{\r_k}} = \frac{1}{\lambda_k} \rightarrow 0, \qquad \r_k:=f-\T\x_k,
\]
 dictated by the sequence of scales,
$\lami<\lamii <\ldots<\lam_k$, which is at our disposal.
We summarize with the following theorem.
\begin{thm}[Approximate solutions]\label{thm:L2-data}
Consider $\T:\B\mapsto \OPhid(\Om)$ and assume its dual is injective so that \eqref{ass:int_Hp} holds,
\[
\|\rg-\Pdual\rg\|_{\Leb^{p'}} \leq \BVbeta\dualnorm{\Tdual\rg}, \qquad \forall \rg\in \Leb^{p'}(\Om),
\]
for some $\P:\OPhi\mapsto \OPhi$ whose range is "compatible" with the range of $\T$.
Then, the equation $\T \x=f \in \OPhid(\Omega)$ admits an \emph{approximate solution}, 
$\x_k\in \B$, such that $\T \x_k \eqsim  f$ in the sense that the residuals $\r_k:=f-\T\x_k$ satisfy
\begin{equation}\label{eq:hierarchicala}
\dualnorm{\Lin{\r_{k}}} =  \frac{1}{p\lam_{k}}, \qquad
\qquad \r_k:=f-\T\x_k.
\end{equation}
The approximate solution  admits the  hierarchical expansion, $\x_k=\sum_{j=\i}^k \u_j$, where
the $\u_j$'s are constructed as minimizers,
\[
\bra \u_{j+1},\r_{j+1}\ket =\arginf{\T\u+\r=\r_j}\big\{\Bnorm{\u}+ \lam_{j+1}\Lp{\r}\big\}, \qquad \r_0=f.
\]
\end{thm}

\begin{proof} We  verify that the {\it a priori} duality estimate assumed in \eqref{ass:int_Hp} implies the closure bound sought in  (\ref{ass:closure}).  
Fix $\rg\in \Leb^{p}_\#(\Om)$. Then $\myphi{\rg}:={\rm sgn}(\rg)|\rg|^{p-1}\in \Leb^{p'}(\Om)$ and since  $\rg\in \mbox{Ker}(\P)$ we find
\[
\int_{\Om}  |\rg|^{p}dx =\int_{\Om}\rg \myphi{\rg}dx = \int_{\Om}\rg \big(\myphi{\rg}-\Pdual\myphi{\rg}\big) dx   \leq  \|\rg\|_{L^p}\|\myphi{\rg}-\Pdual\myphi{\rg}\|_{L^{p'}}.
\]
The {\it a priori} dual estimate assumed in  \eqref{ass:int_Hp}  then yields
\[
\|\rg\|^{p}_{\Leb^p} \leq \|\rg\|_{\Leb^p}\|\myphi{\rg}-\Pdual\myphi{\rg}\|_{\Leb^{p'}} \leq \BVbeta\|\rg\|_{\Leb^p}\dualnorm{\Tdual \myphi{\rg}}, \qquad \forall \rg\in \Leb^p_\#(\Om),
\]
and the closure bound \eqref{ass:closure} follows with $\BVbetap:=\BVbeta^{p'}$,
\begin{equation}\label{eq:closure}
\|\rg\|_{\Leb^p}^{p-1} \leq \BVbeta\dualnorm{\Tdual\myphi{\rg}}.
\end{equation}
This allows us to proceed with  the hierarchical iterations \eqref{eq:hrb}, 
\[
\bra \u_j,\r_j \ket \in \B\times \Leb^p_\# \ \ \ \mapsto \ \ \ \bra \u_{j+1},\r_{j+1}\ket:= \arginf{\T \u+\r=\r_j} \JT(\r_j, \lam_{j+1}) \in \B\times \Leb^p_\#, \quad j=1,2, \ldots
\]
starting with $\bra \u_0,\r_0\ket =\bra 0,f\ket$.
A telescoping summation of (\ref{eq:hrb}) yields an approximate solution $\x_k= \sum_{j=1}^k \u_j$
such that its residual $\r_k=f-\T\x_k$ satisfies  \eqref{eq:hierarchicala}.
\end{proof}

\begin{rem}[on the closure bound]
As an example for the closure bound \eqref{eq:closure} for $\T$'s with an injective dual, consider the critical case of $\T=\div: \Leb^\infty \mapsto \Leb^d(\TT^d)$ and let $\P$ denote the zero averaging projection $\P\rg=\rg-\overline{\rg}$. The corresponding dual estimate \eqref{ass:int_Hp} reads
\[
\|g-\overline{\rg}\|_{\Leb^{d'}} \lesssim \|\Tdual\rg\|_{\BV}.
\]
This is the isoperimetric Gagliardo-Nirenberg inequality  and it implies, along the lines of  theorem \ref{thm:L2-data}, the following closure bound corresponding to (\ref{ass:closure})
\[
\|\rg\|^{d-1}_{\Leb^d(\TT^d)} \lesssim \|\mbox{sgn}(\rg)|\rg|^{d-1}\|_{\BV(\TT^d)}, \quad \forall \rg\in \Leb^d_\#(\TT^d).
\]
Equivalently, we can rewrite this as $\|\myphi{\rg}\|_{\Leb^{d'}} \lesssim \|\myphi{\rg}\|_{\BV(\TT^d)}$. The observant reader will notice that the latter  is a slight variant of Gagliardo-Nirenberg inequality  since for $d>2$, $\myphi{\rg}$ need not have zero average; only $\rg$ does.
\end{rem}

\subsection{From approximate to exact solutions}
We turn to show that the approximate solutions, $\x_k=\sum^k_{j=1} \u_j$, converge to a limit $\x=\sum^\infty_{j=1} \u_j$, which is an exact solution sought for (\ref{eq:Lxf}), uniformly bounded in $\B$. 

We start by comparing the minimizer $[\u_{j+1},\r_{j+1}]$ of $\JT(\r_j,\lam_{j+1})$ in \eqref{eq:hrb} with the trivial pair  ${[\u\equiv0,\r_j]}$, which yields the key refinement estimate 
\begin{equation}\label{eq:basicidentity}
 \Lp{\r_{j}}\geq   \frac{1}{\lam_{j+1}}\Bnorm{\u_{j+1}} +  \Lp{\r_{j+1}}, \qquad j=0,1,\ldots.
\end{equation}
In particular, the closure bound (\ref{ass:closure})  followed by (\ref{eq:hierarchicala}) imply
\begin{equation}\label{eq:finite}
\Bnorm{\u_{j+1}} \leq \lam_{j+1}\Lp{\r_j} 
  \left\{\begin{array}{ll}
\displaystyle = \lami\Lp{f}, & j=0, \\ \\
\displaystyle \leq \lam_{j+1}\BVbetap \dualnorm{\Tdual\myphi{\r_j}}^{1/p'} \leq \frac{\lam_{j+1}\BVbetap}{(p\lam_j)^{p'}}, & j=1,2,\ldots,\end{array}\right.
\end{equation} 
where $\{\lam_j\}$ is an increasing sequence of  scales at our disposal.
Setting $\lam_j=\lami \two^{j-1}$, we conclude that the approximate solutions, $\x_k=\sum^k_1 \u_j$ form a Cauchy sequence, 
\[
\Bnorm{\x_k-\x_\ell}  \lesssim  \sum_{j=\ell+1}^k \two^{j(1-p')}\ll 1, \qquad k > \ell\gg 1,
\]
which has a limit,  $\x=\sum_{j=\i}^{\infty} \u_j$, such that  $\Lp{\T\x_j-f}\rightarrow 0$. Since $\T$ has a closed range, $\T\x=f$. 
It remains to show that the limit $\x$ has a finite $\B$-norm, which brings us to 

\noindent
\begin{proof}[{\bf Proof of theorem} \ref{thm:int_Lx=f_in_Lp}] Using (\ref{eq:finite}) with $\BVbetap=\beta^{p'}$ yields 
\[
\Bnorm{\x} \leq \Bnorm{\u_1} + \sum_{j=\i}^\infty \Bnorm{\u_{j+1}}  
  \leq  \lami \Lp{f}+ \sum_{j=\i}^\infty  \frac{\lami \two^{j}\BVbetap}{\left(p\lami \two^{j-1}\right)^{p'}} \leq
\lami \|f\|^p_{\Leb^p} + \left(\frac{2\BVbeta}{p}\right)^{p'}\frac{1}{\lami^{\!\!\!\!\!\!p'-1}(\two^{\!p'-1}-1)}.
\]

\noindent
Set  $\lami :=  \frac{2\BVbeta}{p}\|f\|^{1-p}_{\Leb^p}$. Such a choice of $\lami$  satisfies the admissibility requirement \eqref{eq:const}: indeed,  according to \eqref{ass:closure}, $\|g\|^{p-1}_{\Leb^p} \leq \BVbeta \dualnorm{\Tdual\myphi{g}}$, hence
\[
\lami = \frac{2\BVbeta}{p}\|f\|^{1-p}_{\Leb^p}  > \frac{1}{p\dualnorm{\Tdual\myphi{f}}},
\]
and the uniform bound \eqref{eq:int_x_in_B} follows, 
\begin{equation}\label{eq:tmp123}
\Bnorm{\x}\leq \BVgamma\|f\|_{\Leb^p_\#}, \qquad \BVgamma= \frac{2\BVbeta}{p}\left(1+\frac{1}{\two^{p'-1}-1}\right).
\end{equation}
\end{proof}

\begin{rem}
We summarize the two main aspects in the hierarchical construction.

(i) The existence  minimizers $\{\u_j\}$ of $\JT(\r_{j-1}, \lam_j)$, which follows from basic principles in uniformly convex Banach spaces.
We  use here the mere existence of such minimizers, $\{\u_j\}$,  instead of standard duality-based existence arguments in the closed range theorem, e.g., \cite[VII.5]{Yos80},\cite[I.A.13-14]{Woj91},\cite[theorem 2.20]{Bre10}. We note in passing that  existence of minimizers and duality principles in  uniformly convex Banach spaces can be deduced from each other,  \cite{Jam47}.

(ii) The  exponential decay of these minimizers and hence the uniform bound of their sum, 
$\Bnorm{\x}\leq \sum \Bnorm{\u_j} \lesssim \|f\|_{\Leb^p_\#}$,  follow from the key {\it a priori} dual estimate \eqref{ass:int_Hp} used in   the refinement step \eqref{eq:finite}.
\end{rem}

\begin{rem}[Extension to Orlicz spaces]\label{rem:orlicz} The hierarchical construction extends to  equations valued in more general Orlicz spaces,
\begin{equation}\label{eq:orlicz}
\T\x=f \in \Leb^\Phi_\#:=\Leb^\Phi\cap \text{Ker}(\Pdual), \qquad \Leb^\Phi=\{f \ : \ [f]_\Phi:=\int_\Om \Phi(|f|)dx < \infty \},
\end{equation}
for a proper $N$-function $\Phi$, satisfying the $\Delta_2$ condition, \cite[\S8]{AF03},\cite[\S4.8]{BS88}. Assume that the  following {\it a priori} closure bound holds: there exists an increasing function $\eta: \RR_+ \mapsto \RR_+$ such that 
\[
[\rg]_\Phi \leq \eta(\Bnorm{\Tdual\myphi{\rg}}), \qquad \int_{s=0}^1 \frac{\eta(s)}{s^2}ds <\infty.
\]
Then, the problem \eqref{eq:orlicz} admits the bounded hierarchical solution, $\x=\sum\u_j$, such that $\Bnorm{\x} \lesssim [f]_\Phi$. The closure bound enters through the initial scale $\lami \geqsim 1/\eta^{-1}([f]_\Phi)$. The $\Leb^p$ setup corresponds to $\Phi(t)=t^p$ and $\eta(s) \sim s^{p'}$. 
\end{rem}

\begin{rem}[Sharp bounds]
The bound \eqref{eq:tmp123} with $p=2$ shows that if $\Tdual$ is injective so that \eqref{ass:int_Hp} holds with constant $\BVbeta$, then   $\T\x=f\in \Leb^2$  admits a solution   $\Bnorm{\x}\leq \BVgamma\|f\|_{\Leb^2}$, with twice the bound  $\BVgamma =2\BVbeta$ (in agreement with the $L^2$-case in theorem  (\ref{thm:main2})). Using a rapidly growing scales, $\lam_{j+1}=\lami \zeta^j$ with $\zeta\gg 1$ yields a tighter bound $\gamma$. A sharp form of the $\B$-bound (\ref{eq:tmp123}) for general $1< p< \infty$,
\begin{equation}{\label{eq:sharp}}_{\hspace*{-5cm} \gamma} \hspace*{3.9cm}
\Bnorm{\x} \leq \gamma \|f\|_{\Leb^p_\#} \quad \text{for \emph{any}} \quad \gamma>\beta,
\end{equation}
can be argued by Hahn-Banach theorem. To this end, we reproduce here a slight generalization of \cite[proposition 1]{BB03}.
Normalize  $\|f\|_{\Leb^p}=1$ and consider the two non-empty convex sets: the ball 
\[
B_{\gamma_\epsilon}:=\{\u\in\B: \ \Bnorm{\u} < \gamma_\epsilon\}, \quad \gamma_\epsilon:=(1+\epsilon)\beta,
\]
and $C:=\{\x\in \B: \ \T\x=f\}$.
The claim is that $B_{\gamma_\epsilon}\cap C\neq \emptyset$ and  the desired estimate \eqref{eq:sharp}${}_{{\gamma_\epsilon}}$ then follows with arbitrarily small $\epsilon$.
If not, $B_{\gamma_\epsilon}\cap C=\emptyset$, and by Hahn-Banach there exists a non-trivial $\rg^*\in \Leb^{p'}$ such that
for some $\alpha \in \mathbb{R}_+$
\begin{subequations}
\begin{equation}\label{eq:sub}
\langle \rg^*,\u\rangle \leq \alpha, \quad \forall \u\in B_{\gamma_\epsilon}
\end{equation}
and
\begin{equation}\label{eq:sup}
\langle \rg^*,\x\rangle \geq \alpha, \quad \forall \x\in C.
\end{equation}
\end{subequations}
If ${V}\in \mbox{Ker}(\T)$ then application of (\ref{eq:sup}) with $\x \mapsto \x\pm \delta {V} \in C$ yields
$\pm\delta \langle \rg^*,{V}\rangle \geq 0$, or $\langle \rg^*,{V}\rangle = 0$; that is, $\rg^*\in \mbox{Ker}(\T)^\perp=\mbox{R}(\T^*)$ is of the form $\rg^*=\Tdual\rg$ for some $\rg\in D(\Tdual)\subset \Leb^{p'}$.
Now,  by  (\ref{eq:sub}) 
\[
\dualnorm{\rg^*} = \sup_{\Bnorm{\u}=\gamma_{\epsilon/2}}\frac{\langle\rg^*,\u\rangle}{\gamma_{\epsilon/2}}\leq \frac{\alpha}{\gamma_{\epsilon/2}},
\]
and the {\it a priori} estimate assumed in  \eqref{ass:int_Hp} implies 
\[
\|\rg\|_{\Leb^{p'}_\#} \leq \beta\dualnorm{\T^*\rg} = \beta\dualnorm{\rg^*}\leq \frac{\alpha}{1+\epsilon/2}.
\]
But this leads to a contradiction: pick $\x\in C$ (which we recall is not empty) then (\ref{eq:sup}) implies,
\[
\alpha \leq \langle \rg^*,\x\rangle =\langle \Tdual\rg,\x\rangle =\langle \rg, f\rangle \leq \|\rg\|_{\Leb^{p'}}\|f\|_{\Leb^p}
\leq \frac{\alpha}{1+\epsilon/2}. \qedhere
\]
\end{rem}

\section{An appendix on $\JT$-minimizers}\label{sec:JTP} 

To study the hierarchical expansions (\ref{eq:hrb}), we characterize the minimizers of the $\JT$-functionals (\ref{eq:blur})
\begin{equation}\label{eq:JTP}
 \bra \u,\r\ket:= \arginf{\T \u+\r=f} \JT(f,\lam) , \qquad \JT(f,\lam):= \inf_{\T\u+\r=f}\Big\{ \Bnorm{\u}+ \lambda\Lp{\r} \,: \, \u\in \B\Big\}.
\end{equation}
Here $\T:\B\mapsto \OPhid(\Omega)$ is densely defined into a subspace of $\Leb^p(\Omega)$ over a Lipschitz domain $\Omega \subset \RR^d$.
 The characterization summarized below extends related results which can be found in 
\cite[Theorem 4]{Me02}, \cite[Chapter1]{ACM04}, \cite[Theorem 2.3]{TNV08}.

Recall that $\dualnorm{\cdot}$ denotes the \emph{dual} norm, 
$\dualnorm{\Tdual\rg}=\mathop{\sup}_{\u}\langle \Tdual\rg,\u\rangle/\Bnorm{\u}$, so that the usual duality bound holds
\begin{equation}\label{eq:Pdual}
\langle \Tdual\rg,\u\rangle\leq \Bnorm{\u}\dualnorm{\Tdual\rg}, \qquad \rg\in\OPsi, \ \u\in \B.
\end{equation} 
We say that $\u$ and $\Tdual\rg$ is  an \emph{extremal pair} if equality holds above.
The theorem below characterizes $\bra\u,\r\ket$ as  a minimizer of the $\JT$-functional if and only if $\u$ and $\Tdual\myphi{\r}$ form an extremal pair.

\begin{thm}\label{thm:thm1}
Let $\T:\B\rightarrow \OPhid(\Omega)$ be a linear 
operator with dual  $\Tdual$ and let 
$\JT(f,\lam)$ denote the associated functional \eqref{eq:blur}.  

(i) The variational problem  \eqref{eq:JTP} admits a minimizer $\u$. Moreover, if $\Bnorm{\cdot}$ is \emph{strictly convex}, then the minimizer $\u$ is unique.

(ii) $\u\in \B$ is a minimizer of \eqref{eq:JTP}  if and only if the residual $\r:=f-\T\u$ satisfies
\begin{equation}\label{cond}  
\langle  \Tdual\myphi{\r}, \u\rangle=\Bnorm{\u}\cdot\dualnorm{\Tdual\myphi{\r}}=\frac{\Bnorm{\u}}{p\lambda}, 
\qquad  \myphi{r}:=\mbox{sgn}(\r)|\r|^{p-1}\in\OPsi.
\end{equation} 
\end{thm}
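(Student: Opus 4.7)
The plan is to establish (i) by the direct method of the calculus of variations, and to deduce the characterization (ii) by testing the minimality of $\u$ against suitable one-parameter families of admissible pairs $\bra\tilde\u,\tilde\r\ket$ with $\T\tilde\u+\tilde\r=f$.

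For (i), I will take a minimizing sequence $\{\u_n\}$ with residuals $\r_n:=f-\T\u_n$. Boundedness of $\JT$ along this sequence controls both $\Bnorm{\u_n}$ and $\LPhi{\r_n}$; the $\Delta_2$-condition on $\Phi$ then forces $\r_n$ to be bounded in $\OPhi$-norm. Passing to a subsequence (weakly in $\B$ if reflexive, otherwise weak-$*$ in the appropriate predual, together with weak convergence in $\OPhi$), linearity of $\T$ preserves the admissibility constraint $\T\u+\r=f$ in the limit, and sequential lower semi-continuity of $\Bnorm{\cdot}$ together with Fatou applied to the modular $\LPhi{\cdot}$ delivers a minimizer. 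For uniqueness, given two minimizers $\u_1,\u_2$ sharing the optimal value $J^*$ with paired residuals $\r_1,\r_2$, the midpoint $\bra(\u_1+\u_2)/2,(\r_1+\r_2)/2\ket$ is admissible and the uniform-convexity inequality strictly decreases $\Bnorm{\cdot}$ unless $\u_1=\u_2$, contradicting optimality whenever $\Bnorm{\u_i}\neq 0$.

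For (ii), the easy direction assumes \eqref{cond} and picks any admissible $\bra\tilde\u,\tilde\r\ket$. Convexity of $\Phi$ yields the pointwise inequality $\Phi(|\tilde\r|)-\Phi(|\r|)\geq \myphi{\r}(\tilde\r-\r)$; integrating, using $\tilde\r-\r=\T(\u-\tilde\u)$, and invoking \eqref{cond} together with the duality bound $\langle\Tdual\myphi{\r},\tilde\u\rangle\leq\Bnorm{\tilde\u}\dualnorm{\Tdual\myphi{\r}}=\Bnorm{\tilde\u}/\lam$ gives
\begin{equation*}
\lam(\LPhi{\tilde\r}-\LPhi{\r})\geq \lam\langle\Tdual\myphi{\r},\u-\tilde\u\rangle\geq \Bnorm{\u}-\Bnorm{\tilde\u},
\end{equation*}
which rearranges to $\Bnorm{\u}+\lam\LPhi{\r}\leq \Bnorm{\tilde\u}+\lam\LPhi{\tilde\r}$, i.e.\ minimality. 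For the converse, I will perform two types of variations. First, rescaling $\u\mapsto(1+s)\u$ and $\r\mapsto\r-s\T\u$ is admissible for all $s\in\RR$; the expansion $\LPhi{\r-s\T\u}=\LPhi{\r}-s\int_\Om\myphi{\r}\T\u\,dx+o(s)$ (whose linear term is controlled in $\OPsi\times\OPhi$ duality thanks to the $\Delta_2$-condition) combined with minimality read for $s>0$ and $s<0$ forces the scalar identity $\langle\Tdual\myphi{\r},\u\rangle=\Bnorm{\u}/\lam$. Next, for an arbitrary direction $\bbv\in\B$ and $t>0$, the perturbation $\u\mapsto\u+t\bbv$ gives $\Bnorm{\u+t\bbv}-\Bnorm{\u}\leq t\Bnorm{\bbv}$, and minimality yields $t\Bnorm{\bbv}-t\lam\langle\Tdual\myphi{\r},\bbv\rangle+o(t)\geq 0$; dividing by $t\downarrow 0$ and taking the sup over $\bbv\neq 0$ delivers $\dualnorm{\Tdual\myphi{\r}}\leq 1/\lam$. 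Combined with the scalar identity above, the duality bound $\langle\Tdual\myphi{\r},\u\rangle\leq\Bnorm{\u}\dualnorm{\Tdual\myphi{\r}}$ must be saturated, producing \eqref{cond}.

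I expect the main obstacle to lie in (i), namely making the compactness step rigorous when $\B$ is not reflexive --- the case $\B=\Leb^\infty$ being the headline application. There one must specify the appropriate weak-$*$ topology, exhibit a predual making $\Bnorm{\cdot}$ sequentially weak-$*$ lower semi-continuous, and check that $\T$ is continuous from this topology into $\OPhi$ equipped with a topology for which $\LPhi{\cdot}$ remains lower semi-continuous. A secondary technicality is justifying the first-order expansion of the modular $\LPhi{\cdot}$ used in the variational argument, for which the $\Delta_2$-condition on $\Phi$ (guaranteeing $\myphi{\r}\in\OPsi$ whenever $\r\in\OPhi$) is essential.
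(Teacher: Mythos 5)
Your argument for part (ii) --- the convexity/duality inequality for sufficiency, and the two first-order variations for necessity (a general perturbation $\u+t\bbv$ to get $\dualnorm{\Tdual\myphi{\r}}\leq 1/\lam$, then the rescaling $\pm\epsilon\u$ to get $\langle\Tdual\myphi{\r},\u\rangle=\Bnorm{\u}/\lam$, then saturation of the duality bound) --- is exactly the paper's proof, and your midpoint/uniform-convexity uniqueness argument is the paper's segment argument in compressed form (both rely tacitly on strict convexity of $\Phi$ to force $\r_1=\r_2$ first). For existence the paper simply cites standard references and omits the argument, so your direct-method sketch and your flag about the non-reflexive case $\B=\Leb^\infty$ are, if anything, more explicit than the source.
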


\begin{proof}

(i) The existence of a minimizer for the $\JT$-functional follows from  standard arguments which we omit, consult \cite{AV94,Me02}. We address the issue of uniqueness. Assume $\u_1$ and $\u_2$ are minimizers with the corresponding residuals $\r_1=f-\T\u_1$ an $\r_2=f-\T\u_2$
\[
\Bnorm{\u_i} + \lam\Lp{\r_i}  = \qmin, \quad i=1,2
\]
We then end up with the one-parameter family of minimizers, $\u_\theta:=\u_1 + \theta (\u_2-\u_1), \ \theta \in [0,1]$,
\[
\qmin \leq \Bnorm{\u_\theta} + \lam \Lp{\r_\theta} 
 \leq \theta \Bnorm{\u_2} + (1-\theta)\Bnorm{\u_1} + \theta \lam \Lp{\r_2}
+ (1-\theta) \lam \Lp{\r_1} = \qmin.
\]
Consequently,  $\Lp{\r_\theta}=\theta  \Lp{\r_2} + (1-\theta)  \Lp{\r_1}$ and hence $\r_1=\r_2$. In particular, $\Lp{\r_1}=\Lp{\r_2}$ implies that the  two minimizers satisfy $\Bnorm{\u_1}=\Bnorm{\u_2}$ and we conclude that the ball $\Bnorm{\u}=\Bnorm{\u_1}\neq 0$  contains the  segment $\{\u_\theta, \ \theta\in [0,1]\}$, which  
by strict convexity, must be the trivial segment, i.e., $\u_2=\u_1$.
\newline
We note in passing that strict convexity is in fact \emph{necessary} for uniqueness, e.g., the counterexample  of lack of uniqueness  over the $\ell^\infty$-unit ball,  \cite[pp. 40]{Me02}.  

(ii) If   $\u$ is a minimizer of (\ref{eq:JTP}) then 
 for any $\h\in \B$ we have
\begin{eqnarray}\label{eq:tmpa}
\lefteqn{ \ \ \JT(\u,\lam)= \Bnorm{\u} +\lambda\Lp{f-\T\u} } \\
 & &\leq  \JT(\u+\epsilon\h,\lam)=\Bnorm{\u+\epsilon \h} + \lambda\Lp{f-\T(\u+\epsilon \h)} \nonumber \\
&  & \leq   \Bnorm{\u}+|\epsilon|\cdot\Bnorm{\h} + \lambda\Lp{f-\T\u}-\lambda\epsilon p \Big( \mbox{sgn}(f-\T\u)|f-\T\u|^{p-1},\T\h\Big)  +o(\epsilon). \nonumber
\end{eqnarray}
It follows that for all $\h \in \B$,
\[
\Big| \big\langle \Tdual\myphi{\r},\h\big\rangle \Big| \leq \frac{1}{p\lambda}\Bnorm{\h} +o(1),
\qquad  \myphi{\r}=\mbox{sgn}(\r)|\r|^{p-1}, \ \r:=f-\T\u,
\]
and by letting $\epsilon \rightarrow 0$
\begin{equation}\label{eq:inqa}
\dualnorm{\Lin{\r}} \leq \frac{1}{p\lambda}.
\end{equation}

\noindent
To verify the reverse inequality, we set  $\h= \pm \u$ and $0<\epsilon<1$ in (\ref{eq:tmpa}), yielding
\[
\Bnorm{\u} +\lambda\Lp{f-\T \u}  \leq (1\pm \epsilon)\Bnorm{\u} + \lambda\Lp{f-\T \u \mp\epsilon \T \u},
\]
and hence $\pm \epsilon\Bnorm{\u}\mp p\lambda \epsilon \big( \myphi{f-\T\u},\T \u\big)  + o(\epsilon)\geq 0$.
Dividing by $\epsilon$ and letting $\epsilon\downarrow 0_+$, we obtain $\Bnorm{\u}={p\lambda}\langle \Tdual\myphi{\r},\u \rangle$ and (\ref{cond}) follows:
\[
\frac{1}{p\lambda} \Bnorm{\u}=\langle \Tdual\myphi{\r},\u \rangle \leq \dualnorm{\Lin{\r}}\Bnorm{\u} \leq \frac{1}{p\lambda}\Bnorm{\u}.
\]

Conversely, we show that if (\ref{cond}) holds then $\u$ is a minimizer. The convexity of $\Leb^p$ yields
\begin{eqnarray*}
\lefteqn{\Lp{f-\T(\u+\h)} = \Lp{r-\T\h} =} \\ 
& & \geq \|r\|^p_{\Leb^p(\Omega)} -p\big( \mbox{sgn}(\r)|\r|^{p-1}, \T (\u+\h)\big) 
+ p\big( sgn(\r)|\r|^{p-1}, \T \u\big) \\
& & =   \Lp{f-\T \u} 
- \ p\overbrace{\big\langle \Tdual\myphi{\r}, (\u+\h)\big\rangle}^{\#1} \  
+ \ p\overbrace{\big\langle \Tdual\myphi{\r}, \u \big\rangle}^{\#2}. 
\end{eqnarray*}
By the equalities assumed in (\ref{cond}),  $\dualnorm{\Lin{\r}}={1}/{p\lambda}$, which implies  
$-p\lambda (\#1)\geq -\Bnorm{\u+ \h}$; moreover, $p\lam(\#2)=\Bnorm{\u}$.
We conclude that for any $\h\in \B$,  
\begin{eqnarray*}
\JT(\u+\h, \lam)=\Bnorm{\u+\h}+\lambda\Lp{f-\T(\u+\h)}
& \geq & \Bnorm{\u+\h} + \lam\Lp{f-\T \u} - p\lam(\#1) + p\lam (\#2) \\
& \geq  & \Bnorm{\u}+\lam\Lp{f-\T \u}=\JT(\u,\lam).
\end{eqnarray*}
Thus, $\u$ is a minimizer of (\ref{eq:JTP}). 
\end{proof}

The next two assertions are a refinement of Theorem \ref{thm:thm1}, depending on the size of $\dualnorm{\Tdual\myphi{f}}$.

\begin{lem}[The case $\dualnorm{\Tdual\myphi{f}} \leq 1/p\lambda$]\label{cor:casei}
Let $\T:\B\rightarrow \OPhid(\Omega)$ with adjoint  $\Tdual$ and let $\JT$ denote the associated functional \eqref{eq:blur}.  Then $p\lam\dualnorm{\Tdual\myphi{f}}\leq 1$ if and only if $\u\equiv 0$ is a minimizer of \eqref{eq:JTP}.
\end{lem}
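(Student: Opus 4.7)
The claim is an ``if and only if.'' I would handle the two directions separately, as they use complementary pieces of the analysis already developed.

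\medskip
\noindent\textbf{From $\u\equiv0$ minimizer to $\lam\dualnorm{\Tdual\myphi{f}}\leq 1$.} This direction is already contained in the argument for Theorem~\ref{thm:thm1}(ii). Indeed, the first portion of that proof --- the expansion \eqref{eq:tmpa} applied to a general perturbation $\u \rightsquigarrow \u + \epsilon \h$ --- yields
\[
\bigl|\langle \Tdual\myphi{\r},\h\rangle\bigr| \leq \frac{1}{\lam}\Bnorm{\h} + \mathcal{O}(|\epsilon|),\qquad \r=f-\T\u,
\]
for \emph{any} minimizer $\u$, regardless of whether $\Bnorm{\u}=0$ or not. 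Letting $\epsilon\downarrow 0$ and taking the supremum over $\h\in\B$ gives $\dualnorm{\Tdual\myphi{\r}}\leq 1/\lam$. Specialized to $\u\equiv 0$, one has $\r=f$ and thus $\lam\dualnorm{\Tdual\myphi{f}}\leq 1$.

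\medskip
\noindent\textbf{From $\lam\dualnorm{\Tdual\myphi{f}}\leq 1$ to $\u\equiv0$ minimizer.} Here I would exploit the convexity of the $N$-function $\Phi$. For any $\u\in\B$ with residual $\r:=f-\T\u$, the subgradient inequality for the convex function $t\mapsto\Phi(|t|)$ with subgradient $\myphi{t}=\mbox{sgn}(t)\Phi'(|t|)$ gives pointwise
\[
\Phi(|\r|) \geq \Phi(|f|) + \myphi{f}\,(\r - f) = \Phi(|f|) - \myphi{f}\,\T\u.
\]
Integrating over $\Omega$ and using the pairing between $\OPsi$ and $\OPhi$,
\[
\LPhi{\r} \geq \LPhi{f} - \bigl\langle \Tdual\myphi{f},\u\bigr\rangle.
\]
Now invoke the duality bound \eqref{eq:Pdual} and the assumed inequality $\lam\dualnorm{\Tdual\myphi{f}}\leq 1$:
\[
\lam\bigl\langle \Tdual\myphi{f},\u\bigr\rangle \leq \lam\dualnorm{\Tdual\myphi{f}}\,\Bnorm{\u} \leq \Bnorm{\u}.
\]
Combining the two inequalities yields
\[
\Bnorm{\u} + \lam\LPhi{\r} \geq \lam\LPhi{f} = \JT(f,\lam)\bigr|_{\u=0},
\]
so $\u\equiv 0$ realizes the infimum in \eqref{eq:JTP}.

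\medskip
The only potentially delicate point is the validity of the subgradient inequality for the composition $t\mapsto\Phi(|t|)$ at $t=0$, which is handled by the convention $\myphi{0}=0$ (consistent with $\Phi'(0)=0$ since $\Phi$ is an $N$-function) so that the inequality $\Phi(|r|)\geq \Phi(0)+\myphi{0}\cdot r = 0$ is trivial there. Apart from this, the proof is a one-line application of convexity in one direction and of the differentiation already carried out in Theorem~\ref{thm:thm1}(ii) in the other direction.
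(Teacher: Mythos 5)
Your proof is correct and takes essentially the same route as the paper: the convexity (subgradient) inequality for $\Phi$ combined with the duality bound \eqref{eq:Pdual} gives the sufficiency of $\lam\dualnorm{\Tdual\myphi{f}}\leq 1$, and the first-order perturbation argument already carried out in Theorem~\ref{thm:thm1}(ii), specialized to $\u\equiv 0$ so that $\r=f$, gives the necessity. The paper simply redoes that perturbation directly for $\u\equiv 0$ rather than citing the earlier computation, but the substance is identical.
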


\begin{proof}
Assume $\dualnorm{\Lin{f}}\leq 1/p\lam$. Then by convexity of $\Leb^p$
\begin{eqnarray*}
\Bnorm{\u}+\lambda \Lp{f-\T \u}&\geq&
\Bnorm{\u}+\lambda\int_\Omega |f|^p dx - p\lambda \int_\Omega\big( \myphi{f}, \T\u \big) dx \\
&\geq & \Bnorm{\u}+\lambda\int_\Omega |f|^p dx - p\lambda\dualnorm{\Tdual\myphi{f}}\Bnorm{\u} \geq \lambda \Lp{f},
\end{eqnarray*}
which tells us that  $\u\equiv0$ is a minimizer of (\ref{eq:blur}). 
Conversely, if $\u\equiv 0$ is a minimizer of (\ref{eq:JTP}), then 
$\epsilon\Bnorm{\u} + \lam\Lp{f-\epsilon\T\u} \geq \Lp{f}$ for all $\u\in\B$. It follows that
\[
\epsilon\Bnorm{\u} -\epsilon p\lam\int_{\Omega}(\myphi{f},\T\u)dx +o(\epsilon) \geq 0.
\]
 Letting $\epsilon \downarrow 0$ we conclude   $p\lam\langle\Tdual\myphi{f},\u\rangle \leq \Bnorm{\u}$, hence $\dualnorm{\Tdual\myphi{f}}\leq 1/p\lambda$. 
\end{proof}

\begin{lem}[The case $\dualnorm{\Tdual\myphi{f}}> 1/p\lambda$]\label{cor:caseii}
Let $\T:\B\rightarrow \OPhid(\Omega)$  with adjoint  $\Tdual$ and let $\JT$ denote the associated functional \eqref{eq:blur}.  
If  $1 < p\lam\dualnorm{\Tdual\myphi{f}} <\infty$, 
then $\u$ is a minimizer of \eqref{eq:JTP}  if and only if $\T\u$ and $\myphi{\r}$ is an extremal pair and 
\begin{equation}\label{condii}  
 \dualnorm{\Tdual\myphi{\r}}=\frac{1}{p\lambda}, \qquad \langle\u,\Tdual\myphi{\r}\rangle = \frac{\Bnorm{\u}}{p\lambda}.
\end{equation} 
\end{lem}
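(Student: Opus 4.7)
My plan is to derive Lemma \ref{cor:caseii} as a direct consequence of Theorem \ref{thm:thm1} combined with Lemma \ref{cor:casei}. The role of the hypothesis $1<\lam\dualnorm{\Tdual\myphi{f}}<\infty$ is to place us in the nontrivial regime where the zero vector is excluded as a minimizer, so that the product identity in \eqref{cond} can be divided by a nonzero $\Bnorm{\u}$ to separately recover the dual-norm identity and the extremal-pair identity.

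For the forward implication, suppose $\u$ is a minimizer of \eqref{eq:JTP}. The first step will be to observe that $\Bnorm{\u}>0$. Indeed, Lemma \ref{cor:casei} characterizes $\u\equiv 0$ as a minimizer precisely when $\lam\dualnorm{\Tdual\myphi{f}}\leq 1$; since we are assuming the strict opposite, no minimizer can vanish. Applying Theorem \ref{thm:thm1}(ii) with residual $\r:=f-\T\u$ yields
\[
\langle \Tdual\myphi{\r},\u\rangle \;=\; \Bnorm{\u}\cdot\dualnorm{\Tdual\myphi{\r}} \;=\; \frac{\Bnorm{\u}}{\lambda}.
\]
Dividing the second equality by $\Bnorm{\u}\neq 0$ gives $\dualnorm{\Tdual\myphi{\r}}=1/\lam$, and the first equality is exactly the statement that $\u$ and $\Tdual\myphi{\r}$ (equivalently, $\T\u$ and $\myphi{\r}$ under the natural pairing $\langle\Tdual\myphi{\r},\u\rangle=(\myphi{\r},\T\u)$) form an extremal pair. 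Together these give \eqref{condii}.

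For the reverse implication, assume $\u$ satisfies \eqref{condii}. Multiplying the identity $\dualnorm{\Tdual\myphi{\r}}=1/\lam$ by $\Bnorm{\u}$ and combining with the extremal-pair identity $\langle\u,\Tdual\myphi{\r}\rangle=\Bnorm{\u}/\lam$ reconstructs the single chain
\[
\langle \Tdual\myphi{\r},\u\rangle \;=\; \Bnorm{\u}\cdot\dualnorm{\Tdual\myphi{\r}} \;=\; \frac{\Bnorm{\u}}{\lambda},
\]
which is precisely condition \eqref{cond} of Theorem \ref{thm:thm1}(ii). That theorem then certifies $\u$ as a minimizer of $\JT(f,\lam;\B,\Phi)$.

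I do not foresee any serious technical obstacle: the lemma is essentially a bookkeeping refinement of Theorem \ref{thm:thm1}(ii). The only place where care is needed is in the forward direction, making sure to rule out $\Bnorm{\u}=0$ before dividing; this is exactly what Lemma \ref{cor:casei} provides under the strict inequality $\lam\dualnorm{\Tdual\myphi{f}}>1$. A minor point worth flagging in the write-up is that the phrase ``$\T\u$ and $\myphi{\r}$ is an extremal pair'' is to be read through the adjoint identity $\langle\Tdual\myphi{\r},\u\rangle=(\myphi{\r},\T\u)$, so that extremality in the $(\B,\Bdual)$ pairing and extremality in the $(\OPhi,\OPsi)$ pairing are interchangeable here.
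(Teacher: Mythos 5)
Your proposal is correct and follows essentially the same route as the paper: the paper's proof likewise invokes the strict inequality $\lam\dualnorm{\Tdual\myphi{f}}>1$ (via Lemma \ref{cor:casei}) to conclude $\Bnorm{\u}\neq 0$ and then divides the chain of equalities in \eqref{cond} from Theorem \ref{thm:thm1}(ii) to obtain \eqref{condii}. Your write-up is merely more explicit about the reverse implication, which the paper leaves implicit in the ``if and only if'' of Theorem \ref{thm:thm1}(ii).
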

\begin{proof} Since $\dualnorm{\Lin{f}}>1/p\lam$ then  $\Bnorm{\u}\neq 0$ and we can now divide the equality on the right of (\ref{cond}) by $\Bnorm{\u}\neq 0$ and (\ref{condii}) follows. 
\end{proof}


\end{document}